\title[Spectral 2-actions, foams, and frames]{Spectral 2-actions, foams, and frames in the spectrification of Khovanov arc algebras}
\author[Dranowski]{Anne Dranowski}
\address{Department of Mathematics \\ 3620 S. Vermont Ave \\KAP 108 \\  Los Angeles, CA 90089}
\email{dranowsk@usc.edu}
\author[Guo]{Meng Guo}
\address{Department of Mathematics\\ Altgeld Hall 105 \\ 1409 W. Green Street \\ Urbana, IL 61801}
\email{mengguo1@illinois.edu}
\author[Lauda]{Aaron Lauda}
\address{Department of Mathematics and Physics \\ 3620 S. Vermont Ave \\ KAP 108  \\ Los Angeles, CA 90089}
\email{lauda@usc.edu}
\author[Manion]{Andrew Manion}
\address{Department of Mathematics \\ 2108 SAS Hall \\ Box 8205 \\ Raleigh, NC 27695}
\email{ajmanion@ncsu.edu}
\date{\today}
\begin{document}

\begin{abstract}
    Leveraging skew Howe duality, we show that Lawson--Lipshitz--Sarkar's spectrification of Khovanov's arc algebra gives rise to 2-representations of categorified quantum groups over $\mathbb{F}_2$ that we call spectral 2-representations.  These spectral 2-representations take values in the homotopy category of spectral bimodules over spectral categories.   We view this as a step toward a higher representation theoretic interpretation of spectral enhancements in link homology.   A technical innovation in our work is a streamlined approach to spectrifying arc algebras, using a set of canonical cobordisms that we call frames, that may be of independent interest.  As a step towards extending these spectral 2-representations to integer coefficients, we also work in the $\mathfrak{gl}_2$ setting and lift the Blanchet--Khovanov algebra to a multifunctor into a multicategory version of Sarkar--Scaduto--Stoffregen's signed Burnside category.
\end{abstract}

\maketitle
 
\tableofcontents

\setcounter{tocdepth}{2}

\section{Introduction}

One of the most exciting directions in link homology is the rapidly emerging study of ``spectrification'' (cf.\ e.g.\ \cite{LS-flow, HKK, LLS-homotopy, LLS-KST, LLS-platform,KW}). Khovanov homology associates a bigraded homology group to a link $L$, but this homology group is not \emph{a priori} the homology or cohomology of some space. While one could construct such a space using a wedge sum of Moore spaces, Lipshitz and Sarkar \cite{LS-flow} approach the problem differently. Given a diagram for $L$, they define a version of the Khovanov complex that in some sense has coefficients in the sphere spectrum $\mathbb{S}$ rather than $\mathbb{Z}$, so that the Khovanov chain groups become free $\mathbb{S}$-modules and the complex is interpreted as a cube-shaped homotopy coherent diagram in the category of spectra. 

Their Khovanov homotopy type is then the homotopy colimit of this complex; it is not, in general, a sum of Moore spaces, and it contains more information than Khovanov homology \cite{LS2}. Indeed, the resulting Steenrod algebra action on Khovanov homology is nontrivial \cite{LS2,SeedSteenrod}, leading to a spectrum-level refinement of Rasmussen's $s$-invariant \cite{LS-sinvariant}. Hence, the spectrum-level invariants are strictly stronger invariants of knots and links. A construction with similar properties was given by Hu--Kriz--Kriz \cite{HKK} and was shown to be equivalent to Lipshitz--Sarkar's in \cite{LLS-homotopy}. Since this foundational work, there has been a great deal done on spectral link homology, defining colored invariants~\cite{Lobb1},   extending these invariants to tangles ~\cite{LLS-KST,LLS-homotopy,LLS-func} based on spectral versions of Khovanov's arc algebra $H^m$ and its relatives,\footnote{The spectral version of $H^m$ can be viewed in some sense as a $\mathbb{S}$-coefficient version of $H^m$; the spectral bimodules over $H^m$ for tangles are then homotopy colimits of cubical diagrams of bimodules that (disregarding bimodule structure) are free over $\mathbb{S}$.} and defining spectra for $\mathfrak{sl}_n$ link homology~\cite{Lobb2}. 

Since its inception, link homology has always provided a guide to higher representation theory. Even beyond geometric representation theory where the first hints of link homology theories and categorification of quantum invariants could be seen, higher representation theory and link homologies have shared a symbiotic relationship with advances in one area leading to advances in the other. We propose that spectrification in link homology is a strong indication that spectrum-level refinements should exist in higher representation theory.

\subsection{Spectral 2-actions}

In this article, we provide evidence that spectral refinements do indeed exist in higher representation theory. Our starting point is the fact that Khovanov's arc algebra $H^m$ categorifies the space of invariants of even tensor powers of the fundamental representation $V$ of $U_q(\mathfrak{sl}_2)$, i.e.\
\[
    K_0(H^m{\rm -pmod}) \cong {\rm Inv}_{U_q(\mathfrak{sl_2})}(V^{\otimes 2m})\,.
\]
where $H^m{\rm -pmod}$ denotes the category of finitely generated projective modules over $H^m$.
If $\vec{k} = (k_1,\ldots,k_n)$ has $0 \leq k_i \leq 2$ for all $i$, and $2m\le n$ entries of $\vec{k}$ are equal to one, then $V^{\otimes 2m}$ is isomorphic to the $U_q(\mathfrak{sl}_2)$ representation arising (by restriction) from the $U_q(\mathfrak{gl}_2)$ representation 
\begin{equation}\label{eq:glnwtk}
    \textstyle \bigwedge_q^{k_1} V \otimes \cdots \otimes \bigwedge_q^{k_{n}} V.
\end{equation}

Quantum skew Howe duality \cite[Theorem 4.2.2]{ckm-webs} involves an action of $U_q(\mathfrak{gl}_{n})$ on the direct sum of these tensor-of-wedge representations of $U_q(\mathfrak{gl}_2)$. Indeed, by \cite[Theorem 4.2.2(4)]{ckm-webs}, the natural action of $U_q(\mathfrak{gl}_{n})$ on 
\[
    \textstyle \bigwedge^\bullet_q \left( \mathbb{C}_q^2 \otimes \mathbb{C}_q^{n} \right) 
    \cong 
    \bigwedge^\bullet_q \left(\mathbb{C}_q^2 \oplus \cdots \oplus \mathbb{C}_q^2\right)
\]
is such that the weight space in $\mathfrak{gl}_{n}$ weight $\vec{k}$ (say with $2m$ entries equal to one) is the tensor-of-wedges in \Cref{eq:glnwtk} above. The Chevalley generators $E_i$ and $F_j$ of $U_q(\mathfrak{gl}_{n})$ act by $U_q(\mathfrak{gl}_2)$-intertwining maps on the tensor-of-wedge representations; as $U_q(\mathfrak{sl}_2)$-intertwining maps these are the maps 
\[
    V^{\otimes 2m} \to V^{\otimes (2m \pm 2)}
\] 
associated to basic cap and cup flat tangles from $2m$ endpoints to $2m \pm 2$ endpoints. Since these maps intertwine the $U_q(\mathfrak{sl}_2)$ actions, they restrict to maps on $U_q(\mathfrak{sl}_2)$-invariant subspaces $\mathrm{Inv}(V^{\otimes 2m}) \to \mathrm{Inv}(V^{\otimes (2m \pm 2)})$. Moreover, the cup and cap maps yield a representation\footnote{Using \cite[Theorem 4.2.2(3)]{ckm-webs}, one can show that this representation is the direct sum of the irreducible $U_q(\mathfrak{gl}_n)$-representations with highest weight $2 \omega_p$ for $0 \leq p \leq n$.} of $U_q(\mathfrak{gl}_{n})$ on the sum of vector spaces $\mathrm{Inv}(V^{\otimes 2m})$ for $2m \leq n$ even. 

It was first observed by Brundan--Stroppel \cite[Remark 5.7]{BrSt3} that the above structure can be categorified using categorified quantum groups\footnote{The construction is most natural using the $\mathfrak{gl}_{n}$ variant $\cal{U}_Q(\mathfrak{gl}_{n})$ first introduced in \cite{MackaayStosicVaz}.  This variant is related to the $\mathfrak{sl}_n$ version from \cite{KL3} via explicit rescaling 2-functors in \cite{L-param}.  } $\cal{U}_Q(\mathfrak{gl}_{n})$. Specifically, the skew Howe representation is categorified by a 2-representation of the 2-category $\cal{U}_Q(\mathfrak{gl}_{n})$ in which the 1-morphisms $\cal{E}_i$ and $\cal{F}_j$ act as cup or cap bimodules over Khovanov's arc algebras $H^m$, and in which 2-morphisms act as dotted-cobordism maps between these bimodules. Due to a sign discrepancy that we will discuss soon, one must either fix the signs in some way (as in \cite{BrSt3}) or define this 2-representation over $\mathbb{F}_2=\mathbb Z/2\mathbb Z$. We want to study spectrifications of this ``skew Howe'' 2-representation involving $H^m$ as an inroads to spectral higher representation theory more generally.  

At the spectral level one could expect that the Lawson--Lipshitz--Sarkar spectral arc algebras $\mathscr{H}^m$ (or relatives) should appear as the object-level data of a skew Howe spectral 2-representation of $\cal{U}_Q(\mathfrak{gl}_{n})$; spectral $\mathscr{H}^m$-bimodules for caps and cups should appear as the 1-morphism-level data, and spaces of 2-morphisms on the target side should be abelian groups or $\mathbb{F}_2$-vector spaces of morphisms in the stable homotopy category of $\mathscr{H}^m$-bimodules. In particular, the 2-morphism spaces should admit maps from abelian groups or $\mathbb{F}_2$-vector spaces of 2-morphisms in $\cal{U}_Q(\mathfrak{gl}_{n})$; see \Cref{sec:BeyondHtpyCategories} below for a brief discussion of more homotopy coherent notions of 2-representation. 

Indeed, adapting up-to-sign naturality results of Lawson--Lipshitz--Sarkar \cite{LLS-func} from the setting of tangles and tangle cobordisms to the categorified quantum group $\cal{U}_Q(\mathfrak{gl}_n)$, we define such a spectrified 2-representation after tensoring the abelian groups of 2-morphisms with $\mathbb{F}_2$. Let $n \geq 2m$ and let $\cal{U}^{\mathbb{F}_2}_Q(\mathfrak{gl}_n)$ denote the result of tensoring the 2-morphism groups in $\cal{U}_Q(\mathfrak{gl}_n)$ with $\mathbb{F}_2$.
\begin{maintheorem}\label{thm:IntroF2Theorem}
    There is an $\mathbb{F}_2$-linear 2-functor from $\cal{U}^{\mathbb{F}_2}_Q(\mathfrak{gl}_n)$ to the 2-category whose objects are spectral categories, whose 1-morphisms are spectral bimodules, and whose 2-morphism spaces are abelian groups of bimodule maps in the stable homotopy category (i.e.\ with weak equivalences of spectral bimodules inverted), tensored with $\mathbb{F}_2$. This 2-functor recovers $\mathscr{H}^m$ for $2m \leq n$ on objects and Lawson--Lipshitz--Sarkar flat-tangle spectral bimodules over $\mathscr{H}^m$ on 1-morphisms. 
\end{maintheorem}

We actually derive \Cref{thm:IntroF2Theorem} from a slightly more involved statement in which $\mathbb{F}_2$ does not appear (and $2 \neq 0$) but in which the defining relations in $\cal{U}_Q(\mathfrak{gl}_n)$ hold only up to possible sign modifications on each term in each relation.

\subsection{Signs and \texorpdfstring{$\mathfrak{gl}_2$}{gl2}-foams}

To lift \Cref{thm:IntroF2Theorem} to a $\mathbb{Z}$-linear statement, one must deal with a well-known sign discrepancy: while one would want the nilHecke relation among 2-morphisms in $\cal{U}_Q(\mathfrak{gl}_n)$ to follow from the Bar--Natan neck cutting relation for dotted cobordisms that holds on the $H^m$ side, the signs in these relations do not agree~\cite{lau-skew}. This discrepancy is one explanation why Khovanov's original construction of link homology is functorial only up to sign under link cobordisms~\cite{Jac,Kh-cob}. 

\begin{figure}
    \includegraphics{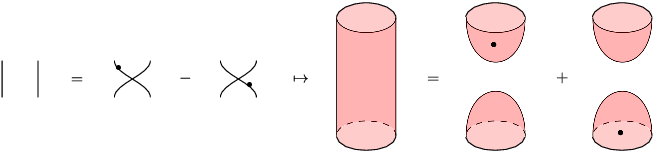}
    \caption{The image of the nilHecke relation in the categorified quantum group maps via Howe duality to Bar-Natan's neck cutting relation, but the signs do not match. }
    \label{fig:nil2neck}
\end{figure}

From the perspective of $\cal{U}_Q(\mathfrak{gl}_n)$ and its relationships with foam and web categories, a satisfying fix for the up-to-sign functoriality of Khovanov homology for closed links is due to Blanchet \cite{Blan} and can be viewed as substituting $\mathfrak{gl}_2$-webs and $\mathfrak{gl}_2$-foams modulo $\mathfrak{gl}_2$-foam relations for crossingless matchings and dotted cobordisms modulo Bar-Natan relations. Foams in the $\mf{gl}_2$ setting carry additional 2-labelled facets not present in the $\mf{sl}_2$ or Bar-Natan setting; see Figure~\ref{fig:nil2neck-blanchet}.

Blanchet's homology for closed links has been spectrified by Krushkal--Wedrich \cite{KW}, using Sarkar--Scaduto--Stoffregen's signed Burnside category $\mathscr{B}_{\sigma}$. Furthermore, based on a careful analysis of the signs in \cite{BrSt3}, Ehrig--Stroppel--Tubbenhauer \cite{EST1} have defined a sign-fixed version of $H^m$ called the Blanchet--Khovanov algebra $\mathfrak{A}^{\mathfrak{F}}$ and established an isomorphism between $\mathfrak{A}^{\mathfrak{F}}$ and an algebra built from $\mathfrak{gl}_2$-webs and $\mathfrak{gl}_2$-foams. However, the Blanchet--Khovanov algebra and its bimodules have not been spectrified.

\begin{figure}
    \includegraphics{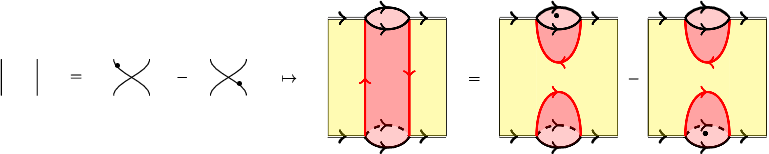}
    \caption{The image of the nilHecke relation in Blanchet foams.  The sign in the nilHecke relation is preserved.  Sliding a dot through a 2-labelled facet multiplies the diagram by $(-1)$. }
    \label{fig:nil2neck-blanchet}
\end{figure}

To see the difficulties involved in spectrifying Blanchet--Khovanov algebras, note that much of the work involved in associating a spectrum to a link diagram can be roughly broken into two steps. The first involves constructing some type of combinatorial data associated with the link. This can take the form of a framed flow category (Cohen--Jones--Segal formalism \cite{LS-flow} as in Floer homotopy theory) or a multicategory enriched in groupoids (Elmendorf--Mandell formalism \cite{HKK}). The second stage utilizes general machinery for producing spectra from these inputs; both approaches lead to the same invariants extending Khovanov homology for closed links \cite{LLS-homotopy}. 

When spectrifying tangle invariants, the Elmendorf--Mandell formalism has proven to be especially useful due to the multiplicativity of Elmendorf--Mandell's infinite loop space machine. Lawson--Lipshitz--Sarkar spectrify $H^m$ and its relatives by defining a multifunctor from a certain ``shape'' multicategory into a multicategory analogue $\underline{\mathscr{B}}$ of the Burnside category $\mathscr{B}$ of the trivial group, which in turn admits a natural multifunctor to the multicategory $\underline{\mathrm{Permu}}$ of permutative categories. The Elmendorf--Mandell construction of $K$-theory spectra then gives a multifunctor from $\underline{\mathrm{Permu}}$ into symmetric spectra; the multifunctoriality of this construction is necessary when using it to define spectral algebras and bimodules.

One difficulty in spectrifying the Blanchet--Khovanov algebras is that it is not clear to us how to get a multifunctor with the right properties (variant of the Elmendorf--Mandell machine) from our multicategory analogue $\underline{\mathscr{B}}_{\sigma}$ of $\mathscr{B}_{\sigma}$ into symmetric spectra or some other symmetric monoidal category of highly structured spectra. We will not address this question in the current paper; instead, we will focus on the step that maps the shape multicategory into $\underline{\mathscr{B}}_{\sigma}$. The following theorem is proved throughout the course of Definition~\ref{def:MultifunctorGl2Case}.

\begin{maintheorem}\label{mthm:burn-bkalg}
    For a finite sequence $\vec{k}$ of elements of $\{0,1,2\}$, let $\mathcal{S}_{\vec{k}}$ be the shape multicategory defined in \Cref{def:ShapeMulticatForWk}. 
    There is an associated multifunctor
    \[
        \Phi_{\vec{k}} \colon \mathcal{S}_{\vec{k}} \to \underline{\mathscr{B}}_{\sigma}.
    \]
    The composition of $\Phi_{\vec{k}}$ with the forgetful map $\underline{\mathscr{B}}_{\sigma} \to \underline{\mathsf{Ab}}$ is identified with the Blanchet--Khovanov algebra $\mathfrak{A}^{\mathfrak{F}}$ associated to $\vec{k}$.
\end{maintheorem}

\begin{mainconjecture}\label{conj:MultifunctorSignedBurnsideToSpectra}
    There is a multifunctor from $\underline{\mathscr{B}}_{\sigma}$ to a multicategory of spectra whose composition with the multifunctor of \Cref{mthm:burn-bkalg} produces spectral algebras relating to the Blanchet--Khovanov algebras in the same way that $\mathscr{H}^m$ relates to $H^m$.
\end{mainconjecture}

\Cref{mthm:burn-bkalg} readily generalizes to bimodules over Blanchet--Khovanov algebras for $\mathfrak{gl}_2$-webs; see \Cref{rem:SignedBurnsideWebBimodules}. \Cref{conj:MultifunctorSignedBurnsideToSpectra} generalizes correspondingly, and we expect that the spectral bimodules of the generalized conjecture recover Krushkal--Wedrich's spectrified $\mathfrak{gl}_2$-foam homology for closed links. 

\subsection{Signed Burnside lift of morphism categories}\label{sec:SignedBurnsideMorphismLift}

To define more homotopical types of 2-representation of $\cal{U}_Q(\mathfrak{gl}_n)$ than in \Cref{thm:IntroF2Theorem} or its proposed $\mathbb{Z}$-linear lift via \Cref{mthm:burn-bkalg}, one would like to have a version of $\cal{U}_Q(\mathfrak{gl}_n)$ with a spectrum of 2-morphisms between any two given 1-morphisms. One would then look for maps on 2-morphism spectra rather than homomorphisms between abelian groups of 2-morphisms. We thus would like a spectrified version of $\cal{U}_Q(\mathfrak{gl}_n)$, where each morphism category of $\cal{U}_Q(\mathfrak{gl}_n)$ becomes a spectral category.

In fact, the Blanchet--Khovanov algebras are a special case of the morphism categories in a $\mathfrak{gl}_2$-foam 2-category $\mathfrak{F}$ (see e.g. \cite[Definition 2.17]{EST1}) which is closely related to $\cal{U}_Q(\mathfrak{gl}_n)$. While we do not yet have a spectral version of $\mathfrak{F}$, our signed Burnside versions of the Blanchet--Khovanov algebras do extend to the more general morphism categories in $\mathfrak{F}$.

\begin{maintheorem}[cf.\ \Cref{rem:GeneralMorSpaces}]
    \label{thm:SignedBurnsideMorphismCategories}
    Generalizing \Cref{mthm:burn-bkalg}, given $\vec{k}_1$ and $\vec{k_2}$, there is a shape multicategory $\cal{S}_{\vec{k}_1,\vec{k}_2}$ and a multifunctor from $\cal{S}_{\vec{k}_1,\vec{k}_2}$ into $\underline{\mathscr{B}}_{\sigma}$. The composition with the forgetful map to abelian groups is identified with the morphism category from $\vec{k}_1$ to $\vec{k}_2$ in $\mathfrak{F}$.
\end{maintheorem}

\begin{remark}
    Morphism categories in $\mathfrak{F}$ are related but not identical to morphism categories in $\cal{U}_Q(\mathfrak{gl}_n)$. By \cite[Proposition 3.22]{QR} they are equivalent, via foamation functors, to morphism categories in a variant called $\check{\cal{U}}_Q(\mathfrak{gl}_{\infty})^{(2)}$. This variant has additional 1-morphisms for divided powers, while the superscript $(2)$ means that one takes the quotient by all objects except those corresponding to sequences $\vec{k}$ with all entries in $\{0,1,2\}$. Replacing $\mathfrak{gl}_n$ by $\mathfrak{gl}_{\infty}$ means that one lets $\vec{k}$ be any infinite sequence $\{k_i\}_{i=1}^{\infty}$ that is eventually zero, rather than fixing the length of $\vec{k}$ to be some $n$. There is an evident 2-functor from $\cal{U}_Q(\mathfrak{gl}_n)$ to $\check{\cal{U}}_Q(\mathfrak{gl}_{\infty})^{(2)}$ for any $n$.
\end{remark}

\subsection{Frames}

We prove \Cref{mthm:burn-bkalg} and \Cref{thm:SignedBurnsideMorphismCategories} using a streamlined version, based on certain canonical multimerge cobordisms called frames, of Lawson--Lipshitz--Sarkar's lift of $H^m$ to the Burnside category. We also use these frames to reformulate Lawson--Lipshitz--Sarkar's construction itself without bringing in foams; this reformulation may be of independent interest.

In more detail, Lawson--Lipshitz--Sarkar \cite{LLS-KST} define a multifunctor from a shape multicategory $\cal{S}_m$ to $\underline{\mathscr{B}}$ in two steps. They first define a multifunctor from $\cal{S}_m$ to a multicategory that encodes a more structured version of embedded cobordisms where the cobordisms are now equipped with extra structure called divides. They then postcompose this multifunctor with a multifunctor $\underline{V}_{\mathrm{HKK}}$ from the divided cobordism multicategory to $\underline{\mathscr{B}}$. The existence of complicated isotopies in general categories of embedded cobordisms motivates their use of divided cobordisms, for which the set of allowed isotopies is more limited.

To avoid isotopies altogether, one could try to define a multifunctor directly from $\cal{S}_m$ to $\underline{\mathscr{B}}$, skipping embedded cobordism categories and their isotopies. From the perspective on $H^m$ taken in \cite{LLS-KST}, however, the definition of multiplication in $H^m$ and thus its spectral version depends on choices of ``multi-merge cobordisms'' in a way that makes it difficult to go directly from $\cal{S}_m$ to $\underline{\mathscr{B}}$. The divided cobordism category plays a useful role in managing the dependence on choices of multi-merge cobordisms.

As discussed e.g.\ in \cite{EST1}, there is an alternate approach to $H^m$ that does not require any choices to be made in defining its multiplication. The ``natural'' web algebras ``$\cal{W}^{\natural}$'' of \cite[Definition~2.19]{EST1}, defined without choices of multi-merge cobordisms, are isomorphic to the web algebras ``$\cal{W}$'' of \cite[Definition~2.24]{EST1} defined with choices of multi-merge cobordisms, by \cite[Lemma~2.27]{EST1}. Basis elements in algebras like $\cal{W}^{\natural}$ look like cobordisms with corners, and they are multiplied by stacking a disjoint union left-to-right rather than gluing a multi-merge cobordism below it (see \Cref{fig:NaturalMult}). 

\begin{figure}[ht!]
    \centering
    \includegraphics{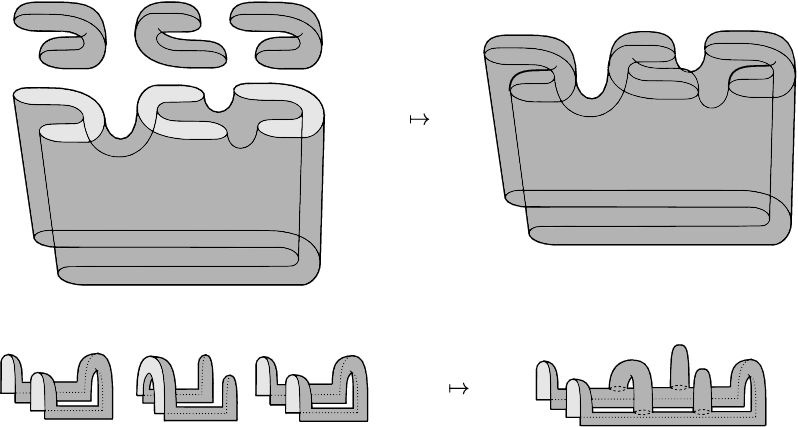}
    \caption{Top: the multi-merge cobordism or ``$\cal{W}$ in \cite{EST1}'' approach to multiplication on $H^m$. Bottom: the left-to-right or ``$\cal{W}^{\natural}$ in \cite{EST1}'' approach to multiplication on $H^m$.}
    \label{fig:NaturalMult}
\end{figure}

A main obstacle, then, to adapting Lawson--Lipshitz--Sarkar's spectrification of $H^m$ to the $\cal{W}^{\natural}$ perspective is that their spectrification makes explicit use of multi-merge cobordisms, e.g.\ via checkerboard colorings of regions in the space where the cobordism is embedded (in the definition of $\underline{V}_{\mathrm{HKK}}$).

We overcome this obstacle by identifying, in the $\cal{W}^{\natural}$ perspective, canonical embedded cobordisms $F$ that become multi-merge cobordisms after translation to the $\cal{W}$ perspective. We call these cobordisms $F$ ``frames;'' see Figure~\ref{fig:NaturalMultUsingFrames}. Whenever Lawson--Lipshitz--Sarkar's construction makes use of a multi-merge cobordism, we can use the corresponding frame $F$ instead. We get the following result, stated more precisely as \Cref{thm:Sl2MultifunctorValid} below.

\begin{figure}[ht!]
    \centering
    \includegraphics{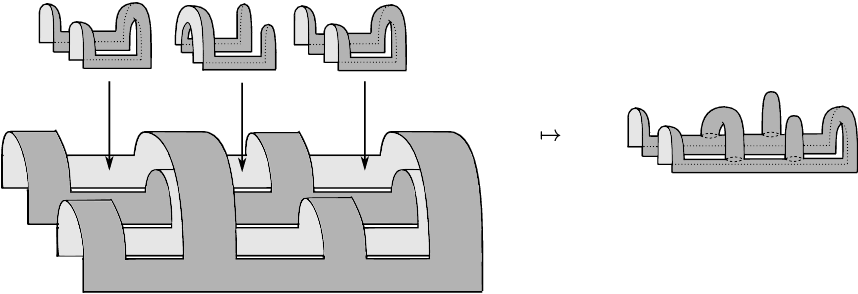}
    \caption{The left-to-right or ``$\cal{W}^{\natural}$ in \cite{EST1}'' approach to multiplication on $H^m$ interpreted as gluing into a frame; the frame is the piece on the bottom left.}
    \label{fig:NaturalMultUsingFrames}
\end{figure}

\begin{maintheorem}\label{thm:DirectlyToBurnside}
    The multifunctor directly from $\mathcal{S}_m$ to $\underline{\mathscr{B}}$ that we define, by unwinding Lawson--Lipshitz--Sarkar's definitions and translating to the frames perspective, agrees with Lawson--Lipshitz--Sarkar's multifunctor used in defining $\mathscr{H}^m$ and can be shown to be well-defined using only the subset of the arguments of \cite{LLS-KST} that we describe in Section~\ref{sec:Frames}.
\end{maintheorem}

We will actually state and prove \Cref{thm:DirectlyToBurnside} more generally, with $\mathcal{S}_m$ replaced by Lawson--Lipshitz--Sarkar's larger multicategory $\underline{2}^N \tilde{\times} {_{m'}}\!\mathcal{T}_m$ used to construct spectral bimodules for tangles. In this way, we also recover the shape to Burnside step of Lawson--Lipshitz--Sarkar's spectrification of tangle bimodules over $H^m$. 

\begin{remark}\label{rem:SignedBurnsideWebBimodules}
    As with \Cref{thm:DirectlyToBurnside}, we could generalize \Cref{mthm:burn-bkalg} to define signed Burnside versions of the bimodules over Blanchet--Khovanov algebras associated to $\mathfrak{gl}_2$-webs, although we will not write out the details since they are parallel to \Cref{thm:DirectlyToBurnside} (see \Cref{rem:WebBimodules}).
\end{remark}

\subsection{Beyond homotopy categories}
\label{sec:BeyondHtpyCategories}

\Cref{thm:IntroF2Theorem} and \Cref{conj:MultifunctorSignedBurnsideToSpectra} are about 2-representations into 2-categories of spectral bimodules and bimodule maps up to homotopy. It is natural to ask if this homotopical quotient can be removed and refined to a more robust spectral representation theory. As mentioned in \Cref{sec:SignedBurnsideMorphismLift}, this would require new spectral extensions of the categorified quantum group.
However, we believe that there are numerous indications that such spectral modifications should exist.  Below we list some of these:

\subsubsection{Steenrod structures in categorified quantum groups}

Initial investigation of Steenrod structures on categorified quantum groups has already been done by Beliakova--Cooper~\cite{CoopB} and by Kitchloo~\cite{Kit}.

\subsubsection{2-Verma Modules}

Another fundamental issue in higher representation theory is the fact that 2-Kac Moody algebras such as $\cal{U}_Q(\mathfrak{gl}_n)$, as currently defined, do not admit 2-representations that categorify infinite-dimensional representations.  This is a fairly serious problem since it excludes the categorification of Verma modules, which are perfectly sensible objects of category $\mathcal{O}$.

Naisse and Vaz ~\cite{NVaz,NVaz2} overcome this issue in the case of $\mf{sl}_2$ by omitting the biadjointness condition, enhancing the nilHecke algebra to the extended nilHecke algebra, and altering the main $\mathfrak{sl}_2$ relation to a non-split short exact sequence rather than a direct sum decomposition.  A key point is that these categories \emph{do not} admit actions by categorified quantum groups.   Instead, this work suggests the existence of a refinement of categorified quantum groups formulated in the setting of triangulated categories.

\subsubsection{Tensor products of 2-representations} 

In Rouquier's work on higher tensor products \cite{RouquierHopfCategories}, one must also replace the usual additive-category setting for categorified quantum groups with an $A_{\infty}$ or $\infty$-categorical setting. Stricter versions are only possible in some special cases ~\cite{manion2020higher,McM1,McM2}. We view this phenomenon as further evidence that the higher representation theory of categorified quantum groups, properly understood, is homotopical in nature. 

\subsubsection{Higher categories of Soergel bimodules}

Recently a preprint was posted by Liu, Mazel-Gee, Reutter, Stroppel, and Wedrich \cite{LMGRSW} constructing a braided monoidal $(\infty,2)$-category version $\mathbf{K}^b_{\mathrm{loc}}(\mathrm{SBim})$ of the derived Soergel bimodule 2-category; in particular, they show that the construction of Rouquier complexes of Soergel bimodules for braids is natural in a fully homotopy coherent sense. The higher representation theory of the Soergel bimodule 2-category is closely related to that of $\cal{U}_Q(\mathfrak{gl}_n)$ \cite{MackaayStosicVaz}, and we would expect that similar homotopy-coherent structures exist in the $\cal{U}_Q(\mathfrak{gl}_n)$ setting.

\subsection{Further remarks}

\begin{remark}\label{rem:SuppressQuantumGradings}
    While we do not discuss quantum gradings explicitly in this paper, the discussion would be parallel to \cite[Section 6]{LLS-KST}; the ideas in that section apply equally well to our constructions here.
\end{remark}

\begin{remark}
    The reason the signed Burnside multicategory appears when trying to spectrify $\mathfrak{gl}_2$-foam constructions is as follows. In Lawson--Lipshitz--Sarkar's spectrification of $H^m$, an important property is that in the usual basis, the structure constants of the algebra multiplication are all nonnegative integers. When passing from $H^m$ to the Blanchet--Khovanov algebra, this feature is not preserved, and negative numbers appear as structure constants. An analogue in the case of closed link homology is Sarkar--Scaduto--Stoffregen's spectrification of odd Khovanov homology, in which the structure constants for the differential in the chosen basis involve both positive and negative numbers. They deal with the minus signs by introducing the signed Burnside category $\mathscr{B}_{\sigma}$. From $\mathscr{B}_{\sigma}$, they get to spectra using explicit constructions into which they can insert various orientation-reversing maps as needed.

    When compared with spectrifying homology groups for closed links, where there are several equivalent approaches, the case of spectrifying algebras is more delicate due to the subtleties of multiplicative infinite loop space theory and symmetric monoidal structures on categories of structured spectra. It does not seem easy to avoid technology like the Elmendorf--Mandell construction or to insert orientation-reversing maps by hand while preserving associativity of the algebra multiplication. On the other hand, it also does not seem easy to adapt the Elmendorf--Mandell construction so that we can work with $\underline{\mathscr{B}}_{\sigma}$ analogously to how Lawson--Lipshitz--Sarkar work with $\underline{\mathscr{B}}$. Optimistically, we hope to navigate these difficulties by working with $\mathbb{Z}/2\mathbb{Z}$-equivariant spectra, but it also seems plausible that a more direct approach exists and we just didn't see it. If so, it would be great to find it!
\end{remark}

\subsection{Outline}

The layout of the body of the paper is roughly in reverse order with respect to the introduction, since we use frames as a tool in multiple places. In \Cref{sec:Preliminaries}, we review the relevant preliminaries, mostly following \cite{LLS-KST}. In \Cref{sec:Frames}, we introduce frames and prove \Cref{thm:DirectlyToBurnside}. In \Cref{sec:BlanchetKhovanovAlgebras}, we review the algebraic definition of Ehrig--Stroppel--Tubbenhauer's Blanchet--Khovanov algebras \cite{EST1}. In \Cref{sec:SignedBurnsideLift}, we define signed Burnside versions of these algebras and of more general morphism categories in $\cal{U}_Q(\mathfrak{gl}_n)$, proving \Cref{mthm:burn-bkalg} and \ref{thm:SignedBurnsideMorphismCategories}. Finally, in \Cref{sec:UpToSign2Action}, we adapt results from \cite{LLS-func} to prove \Cref{thm:IntroF2Theorem}.

\subsection{Acknowledgments}

We would like to thank Robert Lipshitz, Sucharit Sarkar, Hiro Lee Tanaka, and Daniel Tubbenhauer for useful discussions.  A.D.L.\ is
partially supported by NSF grants DMS-1902092 and DMS-2200419, the Army Research Office
W911NF-20-1-0075, and the Simons Foundation collaboration grant on New Structures in Low-dimensional Topology. A.D.\ is partially supported by NSERC. A.M.\ is partially supported by NSF grant number DMS-2151786.

\section{Preliminaries}\label{sec:Preliminaries}

\begin{definition}[cf. Definition 2.1 of \cite{LLS-KST}]
    A \new{multicategory} (or \new{non-symmetric colored operad}) $\uC$ consists of
    \begin{itemize}
        \item a class $\uC_0$ whose elements are called the \emph{objects} of $\uC$;
        \item for any $x_1,\dots,x_n, y\in\uC_0$, a set $\uC(x_1,\dots,x_n;y)$ called the set of \emph{multimorphisms} from $(x_1,\ldots,x_n)$ to $y$;
        \item for any $x^i_j, y_i, z\in\uC_0$, a function 
        \[
            \uC(x_1^1,\dots,x_{n_1}^1;y_1)\times\cdots\times \uC(x_1^m,\dots,x_{n_m}^m;y_m) \times \uC(y_1,\ldots,y_m;z) \to \uC(x^1_1,\dots,x^m_{n_m};z)
        \]
        called \emph{multicomposition};
         \item when $n =1$ and $y = x_1$, a distinguished element  $1_{x_1}\in\uC(x_1;x_1)$ called the \emph{identity} on $x_1$
    \end{itemize}
    such that the associativity and unitality properties (M-5)--(M-7) of \cite[Definition 2.1]{LLS-KST} are satisfied. A \new{multifunctor} $\Phi$ from $\uC$ to $\underline{\cal{D}}$ is an assignment of an object $\Phi(x)$ of $\underline{\cal{D}}$ to each object $x$ of $\uC$ and a function
    \[
        \uC(x_1,\ldots,x_n;y) \to \underline{\cal{D}}(\Phi(x_1),\ldots,\Phi(x_n);\Phi(y))
    \]
    to each tuple of objects $x_1,\ldots,x_n,y$ of $\uC$, such that multicomposition and identities are strictly respected.
\end{definition}

\begin{definition}[cf. Definition 2.2 of \cite{LLS-KST}]\label{def:ShapeMulticat}
    For a finite set $X$, the \new{shape multicategory} of $X$, denoted $\mathcal{S}^0(X)$, has set of objects $X \times X$. Given elements $(x_0,x_1),\ldots,(x_{n-1},x_n)$ of $X \times X$ ($n \geq 0$), there is a unique multimorphism
    \[
        ((x_0,x_1),\ldots,(x_{n-1},x_n)) \to (x_0,x_n),
    \]
    and these are the only multimorphisms in $\mathcal{S}^0(X)$. Multicomposition is defined in the only possible way. The unique multimorphism from an object $(x_0,x_1)$ to itself is an identity multimorphism for multicomposition. 
\end{definition}

\begin{example}
    Let $A$ be an algebra with orthogonal set of idempotents $I = \{e_x\}_{x\in X}$. Associated to $A$ is a multifunctor $F : \mathcal S^0(X) \to \uAb$ taking an object $(x_0,x_1)\in X\times X$ to the abelian group $e_{x_0} A e_{x_1}$ and a multimorphism $((x_0,x_1),\ldots,(x_{n-1},x_n))\to (x_0,x_n)$ to the $n$-fold multiplication map $e_{x_0} A e_{x_1} \otimes \cdots \otimes e_{x_{n-1}} A e_{x_n} \to e_{x_0} A e_{x_n}$. 
\end{example}

\begin{remark}
We will mostly be concerned with multicategories enriched in groupoids. If $\uC$ and $\underline{\cal{D}}$ are two such multicategories, a \new{multifunctor of groupoid-enriched multicategories} from $\uC$ to $\underline{\cal{D}}$ must associate:
\begin{itemize}
    \item an object in $\underline{\cal{D}}$ to each object in $\uC$;
    \item a 1-multimorphism in $\underline{\cal{D}}$ to each 1-multimorphism in $\uC$;
    \item an (invertible) 2-morphism in $\underline{\cal{D}}$ to each (invertible) 2-morphism in $\uC$
\end{itemize}
such that:
\begin{itemize}
    \item multicomposition of 1-multimorphisms is strictly respected;
    \item identity 1-multimorphisms are strictly respected;
    \item vertical composition of 2-morphisms is strictly respected;
    \item identity 2-morphisms are strictly respected;
    \item horizontal multicomposition of 2-morphisms is strictly respected.
\end{itemize}
\end{remark}

\begin{definition}[cf. Section 2.4.1 of \cite{LLS-KST}]
Let $\mathscr{C}$ be a multicategory. The \new{canonical groupoid enrichment} $\widetilde{\mathscr{C}}$ of $\mathscr{C}$ is the groupoid-enriched multicategory defined as follows.
\begin{itemize}
    \item The objects of $\widetilde{\mathscr{C}}$ are the same as the objects of $\mathscr{C}$.
    \item For objects $x_1,\ldots,x_n$ and $y$ of $\mathscr{C}$, the groupoid of multimorphisms
    \[
        (x_1,\ldots,x_n) \to y
    \]
    in $\mathscr{C}$ has objects given by decorated rooted plane trees with $n$ leaves, allowing some leaves to be designated as ``stump leaves''. The tree must be equipped with a decoration of each edge by an object of $\mathscr{C}$ and each internal vertex (all vertices except the root and the non-stump leaves) by a multimorphism in $\mathscr{C}$ from the sequence of edges inputting to the vertex to the single edge outputting from the vertex. Each internal vertex that is a stump leaf with output edge labeled by $x \in \mathscr{C}$ must thus be labeled with a 0-input multimorphism from the empty sequence of inputs to $x$. The non-stump leaf edges are required to be decorated by $x_1,\ldots,x_n$ in order, and the root edge is required to be decorated by $y$. 
    \item For two decorated trees $T$ and $T'$ such that the multicomposition in $\mathscr{C}$ of the decorations of $T$ equals the multicomposition of the decorations of $T'$, the multimorphism groupoid from $(x_1,\ldots,x_n)$ to $y$ in $\widetilde{\mathscr{C}}$ is defined to have a unique morphism from $T$ to $T'$ which we will call the change-of-tree morphism from $T$ to $T'$. If the decorations of $T$ and $T'$ do not multicompose to the same multimorphism in $\mathscr{C}$, the set of 2-morphisms from $T$ to $T'$ is defined to be empty.
    \item Multicomposition of decorated trees $T_1,\ldots,T_n$ and $T'$, where the output objects of each $T_i$ agree with the input objects of $T'$, is defined by gluing $T_1,\ldots,T_n$ on top of $T'$ (the root vertices of the $T_i$ and the non-stump leaf vertices of $T'$ go away, rather than becoming internal vertices of the multicomposition), with decorations given by the decorations on internal vertices of $T_1, \ldots,T_n$ and $T'$.
    \item Vertical composition of change-of-tree morphisms $T \to T' \to T''$ is defined in the only possible way. Horizontal multicomposition of change-of-tree morphisms is also uniquely determined. 
\end{itemize}
\end{definition}

For a finite set $X$, we will let $\mathcal{S}(X)$ denote the canonical groupoid enrichment $\widetilde{\mathcal{S}^0(X)}$ of the shape multicategory of $X$.

We next consider the Burnside multicategory; in the definition we give, multicomposition is not strictly associative. This issue can be fixed in various ways, e.g. by replacing abstract finite sets with finite subsets of $\mathbb{R}^k$ for various $k$. We refer to \cite[Definition 2.56]{LLS-KST} for more details about this fix.

\begin{definition}[cf. Definition 2.56 and Section 3.2.1 of \cite{LLS-KST}]\label{def:BurnsideMulticat}
    Let $\underline{\mathscr{B}}$, the \new{Burnside multicategory}, denote the groupoid-enriched multicategory defined as follows.
    \begin{itemize}
        \item The objects of $\underline{\mathscr{B}}$ are finite sets $X$. 
        \item Given objects $X_1, \ldots, X_n$ and $Y$, a 1-multimorphism from $X_1,\ldots,X_n$ to $Y$ is a correspondence 
        \[
            Y \leftarrow A \rightarrow X_1 \times \cdots \times X_n
        \]
        of finite sets. One can think of $A$ as a matrix whose entries are finite sets; rows of the matrix are indexed by $Y$ and columns are indexed by $X_1 \times \cdots \times X_n$. This matrix interpretation is why we write $Y \leftarrow A \rightarrow (X_1 \times \cdots \times X_n)$ instead of $(X_1 \times \cdots \times X_n) \leftarrow A \rightarrow Y$.
        \item Let $n_1,\ldots,n_m \geq 0$, and suppose we have correspondences
        \[
            Y_i \leftarrow A_i \rightarrow X^i_1 \times \cdots \times X^i_{n_i}
        \]
        for $1 \leq i \leq m$ as well as
        \[
            Z \leftarrow B \rightarrow Y_1 \times \cdots \times Y_m.
        \]
        The multicomposition of these correspondences is the correspondence from $X^1_1 \times \cdots \times X^m_{n_m}$ to $Z$ obtained by, first, taking the product correspondence
        \begin{center}
        \[
            \xymatrix@C=-15mm{ & A_1 \times \cdots \times A_m \ar[dl] \ar[dr] \\  *+[l]{Y_1 \times \cdots \times Y_m} & & *+[r] {X^1_1 \times \cdots \times X^m_{n_m}}.}
        \]
        \end{center}
        In matrix terms, $A_1 \times \cdots \times A_m$ is a tensor-product matrix built from the $A_i$. One then matrix-multiplies $A_1 \times \cdots \times A_m$ with $B$ on the left; disjoint unions and Cartesian products of sets are used in place of algebraic addition and multiplication when matrix-multiplying. We will write $*$ for composition of correspondences, so that the above multicomposition is
        \[
            B * (A_1 \times \cdots \times A_m).
        \]
        \item For an object $X$ of $\underline{\mathscr{B}}$, the identity object $\mathrm{Id}_X$ of the 1-multimorphism groupoid from $X$ to $X$ is the correspondence
        \[
            X \leftarrow A \rightarrow X 
        \]
        where, for elements $x,y$ of $X$, the entry of $A$ in row $y$ and column $x$ is either empty if $x \neq y$ or a one-point set if $x = y$.
        \item For 1-multimorphisms $A,B$ from $X_1, \ldots, X_n$ to $Y$, a 2-morphism from $A$ to $B$ is an isomorphism of correspondences, i.e. an entrywise bijection from $A$ to $B$ viewed as set-valued matrices. Vertical composition of 2-morphisms is composition of entrywise bijections.
        \item Say $A_i$ ($1 \leq i \leq m$) and $B$ are correspondences whose multicomposition makes sense as above, and $A'_i$ and $B'$ are another such set of correspondences. Say we have 2-morphisms $f_i \colon A_i \to A'_i$ and $g \colon B \to B'$. The horizontal multicomposition $g * (f_1 \times \cdots \times f_m)$ of these 2-morphisms is given as follows.
        \begin{itemize}
            \item First take the product bijection
            \[
                f_1 \times \cdots \times f_m \colon (A_1 \times \cdots \times A_m) \to (A'_1 \times \cdots \times A'_m).
            \]
            \item Each matrix entry of $B * (A_1 \times \cdots \times A_m)$ is a union of Cartesian products of a matrix entry of $A_1 \times \cdots \times A_m$ with a matrix entry of $B$. Together, $f_1 \times \cdots \times f_m$ and $g$ induce maps on the Cartesian products; taking the union of all these maps gives a bijection from the entry of $B * (A_1 \times \cdots \times A_m)$ to the corresponding entry of $B' * (A'_1 \times \cdots \times A'_m)$. These bijections together define $g * (f_1 \times \cdots \times f_m)$.
        \end{itemize}
    \end{itemize}
\end{definition}

Adapting constructions of Sarkar, Scaduto, and Stoffregen \cite{SSS} to the multicategory setting, we also introduce a signed variant $\underline{\mathscr{B}}_{\sigma}$ of $\underline{\mathscr{B}}$.

\begin{definition}[cf. Section 3.2 of \cite{SSS}]
     Let $\underline{\mathscr{B}}_{\sigma}$, the \new{signed Burnside multicategory}, denote the groupoid-enriched multicategory defined as follows, phrased in terms of abstract sets but implicitly fixed to have strictly associative multicomposition as in \cite[Definition 2.56]{LLS-KST}.
    \begin{itemize}
        \item The objects of $\underline{\mathscr{B}}_{\sigma}$ are finite sets $X$.
        \item Given objects $X_1, \ldots, X_n$ and $Y$, a 1-multimorphism from $X_1, \ldots, X_n$ to $Y$ is a finite set $A$ equipped with both the data of a correspondence $Y \leftarrow A \rightarrow (X_1 \times \cdots \times X_n)$ and a map $\sigma_A \colon A \to \{+1,-1\}$. We think of $A$ as a set-valued matrix in which each element of each matrix entry of $A$ is labeled either plus or minus.
        \item Multicomposition of correspondences is defined as in Definition~\ref{def:BurnsideMulticat} with sign data given as follows: 
        \begin{itemize}
            \item The sign of an element of the Cartesian product $A_1 \times \cdots \times A_m$ is the product of the signs of its components.
            \item Any matrix entry in $B * (A_1 \times \cdots \times A_m)$ is a disjoint union, so an element $x$ of the matrix entry is an element of some term in the disjoint union, which is a pair of an element $x_B$ of an entry of $B$ and an element of an entry $x_A$ of $A_1 \times \cdots \times A_m$. The sign of $x$ is defined to be the sign of $x_A$ times the sign of $x_B$.
        \end{itemize}
        \item For an object $X$ of $\underline{\mathscr{B}}_{\sigma}$, the identity object $\mathrm{Id}_X$ of the 1-multimorphism groupoid from $X$ to $X$ is defined as in Definition~\ref{def:BurnsideMulticat} with each element of the correspondence $A$ labeled plus.
        \item 2-morphisms between 1-multimorphisms are entrywise bijections preserving the sign labeling. Vertical and horizontal composition of 2-morphisms are defined as in Definition~\ref{def:BurnsideMulticat}; note that if $f_1,\ldots,f_m$ and $g$ preserve sign labeling, then so does $g * (f_1 \times \cdots \times f_m)$.
    \end{itemize}
\end{definition}

\section{Frames as an alternate approach to the spectrification of \texorpdfstring{$H^n$}{Hn}}\label{sec:Frames}

\subsection{Shape multicategories}

First we review the shape multicategories used in \cite{LLS-KST}.

\subsubsection{Shape multicategory for an even number of points}

Let $\mathsf{B}_n$ denote the set of \new{crossingless matchings} on $2n$ points. A priori, elements of $\mathsf{B}_n$ are combinatorial, but we also fix a smooth embedding of each $a \in \mathsf{B}_n$ into $[0,1] \times (0,1)$ such that the endpoints of the arcs of $a$ are the points $[2n]_{\std} := \{(1,1/(2n+1)),\ldots,(1,2n/(2n+1))\}$. Furthermore, fix a small number $\varepsilon$ so that
\[
    a \cap ([1-\varepsilon,1] \times (0,1)) = [1-\varepsilon,1] \times [2n]_{\std}.
\]

\begin{definition}
    Let $\mathcal{S}^0_n = \mathcal{S}^0(\mathsf{B}_n)$ denote the \new{shape multicategory of $\mathsf{B}_n$} and let $\mathcal{S}_n = \mathcal{S}(\mathsf{B}_n)$ denote its canonical groupoid enrichment.
\end{definition}

Objects of either $\mathcal{S}^0_n$ or $\mathcal{S}_n$ are pairs $(a,b)$ where $a$ and $b$ are crossingless matchings on $2n$ points.

\subsubsection{Shape multicategory for flat tangles}

The following definition is an instance of a more general type (explained in \cite[Definition 2.3]{LLS-KST}) of shape multicategory  than in Definition~\ref{def:ShapeMulticat}.
\begin{definition}[cf. Section 3.2.2 of \cite{LLS-KST}]
    For $m, n \geq 0$, define $\mathcal{T}^0_{m,n}$ to be the multicategory with three kinds of objects: 
    \begin{enumerate}
        \item\label{it:BmPairs} pairs $(a,b)$ where $a,b \in \mathsf{B}_m$;
        \item\label{it:BnPairs} pairs $(a,b)$ where $a,b \in \mathsf{B}_n$;
        \item\label{it:Triples} triples $(a,T,b)$ where $a \in \mathsf{B}_m$ and $b \in \mathsf{B}_n$ ($T$ is just notation here).
    \end{enumerate}
    For objects $(a_0,a_1),\ldots,(a_{k-1},a_k)$ that are all of type \eqref{it:BmPairs}, there is a unique multimorphism
    \[
        (a_0,a_1),\ldots,(a_{k-1},a_k) \to (a_0,a_k)
    \]
    and no other multimorphisms with source $(a_0,a_1),\ldots,(a_{k-1},a_k)$. For objects that are all of type \eqref{it:BnPairs}, multimorphisms are defined similarly.

    Finally, whenever we have a sequence of objects
    \[
        (a_0,a_1), \ldots, (a_{k-1}, a_k), (a_k, T, b_0), (b_0,b_1),\ldots, (b_{l-1},b_l)
    \]
    where the first $k$ objects are of type \eqref{it:BmPairs} and the last $l$ objects are of type \eqref{it:BnPairs}, there is a unique multimorphism with this sequence of objects as its source. The target of this multimorphism is $(a_0,T,b_l)$. Let 
    \[
        {_{m}}\mathcal{T}_n = {_{m}^{\phantom{.}}\widetilde{\mathcal{T}}^0_n}
    \]
    be the canonical groupoid enrichment of ${_{m}^{\phantom{.}}}\mathcal{T}^0_n$.
\end{definition}

\subsubsection{Shape multicategory for general tangles}

Finally, there is a shape multicategory for non-flat tangles with $N$ crossings. As in \cite[Definition 3.6]{LLS-KST}, let $\underline{2}^N_0$ denote the category whose objects are elements $(v_1,\ldots,v_N) \in \{0,1\}^N$ and whose morphisms consist of a unique morphism from $(v_1,\ldots,v_N)$ to $(w_1,\ldots, w_N)$ whenever $v_i \leq w_i$ for $1 \leq i \leq N$.

\begin{definition}[cf. Section 3.2.4 of \cite{LLS-KST}]
    Let $(\underline{2}^N \widetilde{\times} {_m}\mathcal{T}_n)^0$ be the multicategory with objects of three types:
    \begin{enumerate}
        \item\label{it:NonFlatTangleMPair} Pairs $(a,b)$ with $a,b \in \mathsf{B}_m$;
        \item\label{it:NonFlatTangleNPair} Pairs $(a,b)$ with $a,b \in \mathsf{B}_n$;
        \item\label{it:NonFlatTangleQuadruple} Quadruples $(v,a,T,b)$ where $v \in \{0,1\}^N$ and $(a,T,b)$ is an object of ${_m}\mathcal{T}_n$
    \end{enumerate}
    and multimorphisms specified as follows. For $(a_0,a_1), \ldots, (a_{m-1},a_m)$ that are all of type \eqref{it:NonFlatTangleMPair}, there is a unique multimorphism from $((a_0,a_1),\ldots,(a_{m-1},a_m))$ to $(a_0,a_m)$. The same holds when we have objects that are all of type \eqref{it:NonFlatTangleNPair}. For a sequence of objects
    \[
        (a_0,a_1),\ldots,(a_{k-1},a_k), (v,a_k,T,b_0), (b_0,b_1), \ldots, (b_{l-1},b_l),
    \]
    and a morphism $v \to w$ in $\underline{2}^N$, there is a unique multimorphism from the given sequence to $(w,a_0,T,b_l)$.
\end{definition}

\begin{remark}
    In \cite{LLS-KST}, Lawson--Lipshitz--Sarkar do not take the canonical groupoid enrichment of $(\underline{2}^N \widetilde{\times} {_m}\mathcal{T}_n)^0$; rather, they let $\underline{2}^N \widetilde{\times} {_m}\mathcal{T}_n$ denote a smaller multicategory with fewer multimorphisms. To avoid confusion, we will use the notation $\widetilde{(\underline{2}^N \widetilde{\times} {_m}\mathcal{T}_n)^0}$ for the canonical groupoid enrichment of $(\underline{2}^N \widetilde{\times} {_m}\mathcal{T}_n)^0$.
\end{remark}

\subsection{Tangle diagrams}\label{sec:TangleDiagrams}

\begin{definition}[cf.\ Section 2.10 of \cite{LLS-KST}]
    Let $n,m \geq 0$. A \new{flat $(2m,2n)$-tangle diagram} $W$ is an embedded 1-dimensional cobordism in $[0,1] \times (0,1)$ from $\{0\} \times [2m]_{\std}$ at $\{0\} \times (0,1)$ to $\{1\} \times [2n]_{\std}$ at $\{1\} \times (0,1)$.
\end{definition}

For example, a crossingless matching $a \in \mathsf{B}_n$ is an example of a flat $(0,2n)$-tangle diagram. Flat tangle diagrams are projections to two dimensions of the $N=0$ case of the following definition for general (non-flat) tangles.

\begin{definition}[cf.\ Section 2.10 of \cite{LLS-KST}]
    A $(2m,2n)$-tangle diagram with $N$ crossings is an embedded 1-dimensional cobordism in $\mathbb{R} \times [0,1] \times (0,1)$ from $\{0\} \times \{0\} \times [2m]_{\std}$
    to $\{0\} \times \{1\} \times [2n]_{\std}$ whose projection to $[0,1] \times (0,1)$ has $N$ double points (called crossings) and no cusps, tangencies, or triple points, equipped with a total ordering of its crossings.
\end{definition}

\begin{remark}
    If we wanted, following \cite{LLS-KST} we could take equivalence classes of $(2m,2n)$-tangle diagrams modulo diffeomorphisms of the second factor of $\mathbb{R} \times [0,1] \times (0,1)$, and the below constructions would still work.
\end{remark}

Given a $(2m,2n)$-tangle diagram $T$ with $N$ (ordered) crossings, and an element $v$ of $\{0,1\}^N$, we can produce a flat $(2m,2n)$-tangle diagram $T_v$ by projecting $T$ to $[0,1] \times (0,1)$ and resolving crossings as in \cite[Figure 2.4]{LLS-KST} (the ordering of crossings tells us which entry of $v$ corresponds to which crossing). In slightly more detail, we could choose small disks around the crossings to define more precisely what it means to resolve a crossing, as in \cite[proof of Lemma 3.18]{LLS-KST}.

\subsection{Dotted cobordisms}

Unlike in \cite{LLS-KST}, we will be primarily concerned with cobordisms with corners, due to the difference in perspectives shown above in Figure~\ref{fig:NaturalMult}. 

\begin{definition}
    For $a,b \in \mathsf{B}_n$, a \new{dotted cobordism} with corners (or just dotted cobordism) from $a$ to $b$ is a compact orientable surface $\Sigma$ embedded in $[0,1] \times (0,1) \times [0,1]$, with boundary on 
    \[
        (\{0\} \times (0,1) \times [0,1]) \cup ([0,1] \times (0,1) \times \{1\}) \cup (\{1\} \times (0,1) \times [0,1]),
    \]
    such that:
    \begin{itemize}
        \item The boundary of $\Sigma$ on $\{0\} \times (0,1) \times [0,1]$ is $\mathrm{swap}(a)$ as an embedded 1-manifold, where $\mathrm{swap}(a)$ is the embedded submanifold of $(0,1) \times [0,1]$ obtained from $a \subset [0,1] \times (0,1)$ by swapping the coordinates;
        \item The boundary of $\Sigma$ on $[0,1] \times (0,1) \times \{1\}$ is $[0,1] \times [2n]_{\std} \times \{1\}$;
        \item The boundary of $\Sigma$ on $\{1\} \times (0,1) \times [0,1]$ is $\mathrm{swap}(b)$.
    \end{itemize}
    We also make the choice of some nonnegative integer (``number of dots'') for each connected component of $\Sigma$.
\end{definition}

\begin{remark}
    When we draw three-dimensional cubes like $[0,1] \times (0,1) \times [0,1]$, the first coordinate will be drawn left to right, the second coordinate will be drawn back to front, and the third coordinate will be drawn top to bottom. As a visual check, each of the three pieces on the bottom-left of Figure~\ref{fig:NaturalMult} should be a dotted cobordism, in the sense of the above definition, from the crossingless matching on its left side to the crossingless matching on its right side. Each of these crossingless matchings should be viewed as a one-dimensional cobordism from the empty set to a four-point set.
\end{remark}

If $\Sigma$ is a dotted cobordism from $a$ to $b$ and $\Sigma'$ is a dotted cobordism from $b$ to $c$, then we can shrink $\Sigma$ to live in $[0,1/2] \times (0,1) \times [0,1]$, shrink $\Sigma'$ to live in $[1/2,1] \times  (0,1) \times [0,1]$, and concatenate the results to get a dotted cobordism from $a$ to $c$. This operation descends to isotopy classes rel boundary.

\begin{remark}
    From the set of isotopy classes rel boundary of dotted cobordisms from $a$ to $b$, we can form the free abelian group $\text{Cob}(a,b)$ with basis given by these isotopy classes. We can then take the quotient of this free abelian group by the local Bar-Natan relations for dotted cobordisms. The concatenation operation induces $\mathbb{Z}$-bilinear maps from the Bar-Natan quotient for $\text{Cob}(a,b)$ times the Bar-Natan quotient for $\text{Cob}(b,c)$ to the Bar-Natan quotient for $\text{Cob}(a,c)$.
\end{remark}

\subsection{Basis elements from bounding disks}

\begin{definition} 
    For $a, b \in \mathsf{B}_n$, suppose that the subspace
    \[
        (\{0\} \times \mathrm{swap}(a)) \cup ([0,1] \times [2n]_{\std} \times \{1\}) \cup (\{1\} \times \mathrm{swap}(b))
    \]
    of $[0,1] \times (0,1) \times [0,1]$ consists of $k$ circles $C_1,\ldots,C_k$. We will say that the \new{number of circles joining $a$ and $b$} is $k$ and denote this count by $\#\textrm{Circ}(a,b)$.
\end{definition}

If $\#\textrm{Circ}(a,b) = k$, then for each circle $C_1,\ldots,C_k$, choose either ``dot'' or ``no dot.'' There is a unique, up to isotopy rel boundary, choice of a dotted cobordism from $a$ to $b$ consisting of $k$ embedded disks $D_1, \ldots, D_k$ bounding $C_1,\ldots, C_k$, with the number of dots on $D_i$ equal to zero if $C_i$ is labeled ``no dot'' and equal to one if $C_i$ is labeled ``dot.'' Different choices of dot patterns give us $2^k$ isotopy classes of dotted cobordisms. We will denote the set of these isotopy classes by $\mathrm{Disks}(a,b)$.

\begin{remark}
    The elements of $\mathrm{Disks}(a,b)$ give a $\mathbb{Z}$-basis for the Bar-Natan quotient group of $\text{Cob}(a,b)$.
\end{remark}

In connection with flat tangle diagrams, we will use a variant of $\mathrm{Disks}(a,b)$.

\begin{definition}
    For $a \in \mathsf{B}_m$, $b \in \mathsf{B}_n$, and a flat $(2m,2n)$-tangle diagram $T$, let $\overline{T}$ be $T$ viewed as a subspace of
    \[
       [0,1] \times (0,1) \times \{1\} \subset [0,1] \times (0,1) \times [0,1].
    \]
    Suppose that the subspace 
    \[
        (\{0\} \times \mathrm{swap}(a)) \cup \overline{T} \cup (\{1\} \times \mathrm{swap}(b))
    \]
    consists of $k$ circles $C_1,\ldots,C_k$. We will say that $\#\textrm{Circ}(a,T,b) = k$. In this case, there is a unique, up to isotopy rel boundary, choice of compact orientable surface $\Sigma$, with boundary on
    \[
        (\{0\} \times (0,1) \times [0,1]) \cup ([0,1] \times (0,1) \times \{1\}) \cup (\{1\} \times (0,1) \times [0,1])
    \]
    such that topologically $\Sigma$ consists of $k$ disks $D_1,\ldots,D_k$ bounding $C_1,\ldots,C_k$ respectively, and such that:
    \begin{itemize}
        \item The boundary of $\Sigma$ on $\{0\} \times (0,1) \times [0,1]$ is $\mathrm{swap}(a)$;
        \item The boundary of $\Sigma$ on $[0,1] \times (0,1) \times \{1\}$ is $\overline{T}$;
        \item The boundary of $\Sigma$ on $\{1\} \times (0,1) \times [0,1]$ is $\mathrm{swap}(b)$.
    \end{itemize}
    As before, for each circle $C_1,\ldots,C_k$, make a choice of ``dot'' or ``no dot.'' This labeling specifies an assignment of either zero dots or one dot on each $D_i$. We let $\mathrm{Disks}(a,T,b)$ denote the resulting set of isotopy classes of dotted embedded surfaces.
\end{definition}

\subsection{Frames}

We now discuss the frames that we mentioned above in the introduction.

\subsubsection{Frames for arc algebras}

\begin{definition}\label{def:Frames}
    Let $a_0,\ldots,a_m \in \mathsf{B}_n$. The \new{frame} $F_{a_0,\ldots,a_m}$ associated to $a_0,\ldots,a_m$ is the subset of $[0,1] \times (0,1) \times [0,1]$ defined as follows.
    \begin{itemize}
        \item Start with the union, from $i=0$ to $m$, of ``bridges'' 
        \[
            [2i/(2m+1), (2i+1)/(2m+1)] \times \mathrm{swap}(a_i)
        \]
        where we view $a_i$ as an embedded submanifold of $[0,1] \times (0,1)$.
        \item Take the additional union with ``rails''
        \[
            [0,1] \times [2n]_{\std} \times [1-\varepsilon,1].
        \]
    \end{itemize}
    See the left side of Figure~\ref{fig:BasicFrame} for an example of a frame. We think of a frame as embedded in a ``brick castle'' made from one slab at the bottom and $m+1$ bricks laid in parallel on top of it. We implicitly consider two frames to be the same if they are related by rescaling the height of the slab, height of the bricks, widths of the bricks, or widths of the gaps between bricks.
\end{definition}

\begin{figure}
    \centering
    \includegraphics[width=\textwidth]{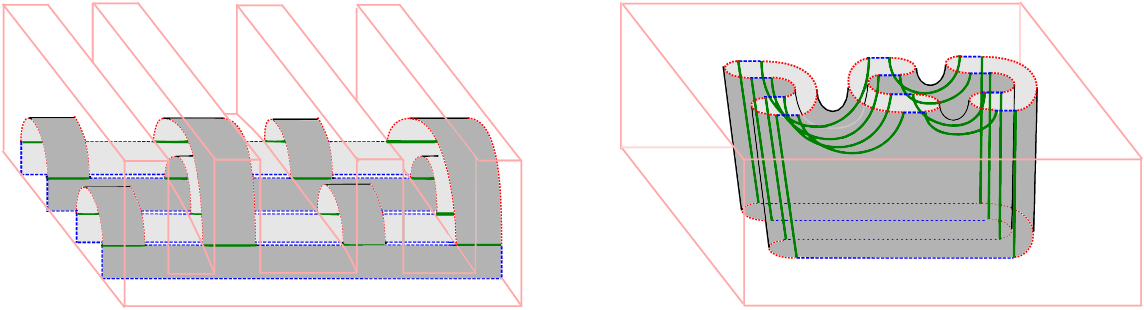}
    \caption{Left: a frame from Definition~\ref{def:Frames}, with its natural divided cobordism structure as in Remark~\ref{rem:DividedCob}. Right: the frame viewed as a multi-merge cobordism as in Remark~\ref{rem:FrameIsMultimerge}.  }
    \label{fig:BasicFrame}
\end{figure}

\begin{remark}\label{rem:FrameIsMultimerge}
    The frame $F_{a_0,\ldots,a_m}$ can be viewed as a canonical choice of multi-merge cobordism from $a_0 \overline{a_1} \sqcup \cdots \sqcup a_{m-1} \overline{a_m}$ to $a_0 \overline{a_m}$ as in \cite[Section 3.3]{LLS-KST}. See the right side of Figure~\ref{fig:BasicFrame}; we are using a homeomorphism between the brick castle and the cube in which the top faces of the leftmost and rightmost bricks get sent to the left and right faces of the cube. If we have dotted cobordisms from $a_{i-1}$ to $a_i$ for $1 \leq i \leq m$, their left-to-right composition can equivalently be obtained by gluing the dotted cobordisms into the internal boundary pieces of $F_{a_0,\ldots,a_m}$ as shown above in Figure~\ref{fig:NaturalMultUsingFrames}.
\end{remark}

\begin{remark}
    If we have $m$ sequences of elements of $H^n$ and we apply left-to-right composition to each sequence individually, then compose the result, we get the same thing as if we had lumped all the sequences together into a large sequence and had done the left-to-right composition all at once. This is one way to say that multiplication in $H^n$ is associative.

    In terms of frames, when doing the composition in two steps, one glues dotted cobordisms into $m$ separate frames first and then glues the results into the input slots of another connecting frame. By contrast, when doing the composition in one step, one glues all of the dotted cobordisms at the same time into the input slots of a single larger frame.

    Furthermore, when doing the two-step composition, it is equivalent to first glue the $m$ separate frames into the connecting frame, then add the dotted cobordisms in the resulting slots. This type of gluing of frames is shown in Figure~\ref{fig:FrameComposition} and corresponds to multicomposition of multimerge cobordisms. One can see from Figure~\ref{fig:FrameComposition} that after rescaling along the first dimension of $[0,1] \times (0,1) \times [0,1]$ as well as some rescalings involving the height $\varepsilon$ of the rails, the frame we assembled in this way agrees with the single larger frame we used to do the composition in one step. This is an equivalent frame-based way to say that multiplication in $H^n$ is associative.
\end{remark}

\begin{figure}
    \centering
    \includegraphics[width=\textwidth]{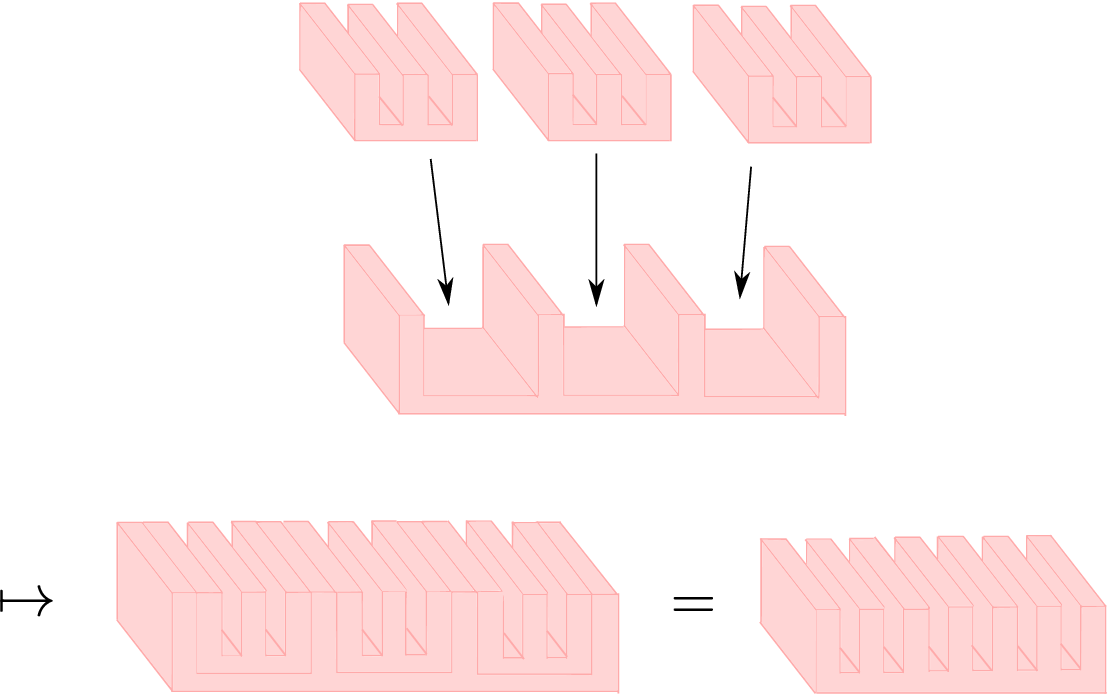}
    \caption{The gluing of frames that corresponds to multicomposition of multimerge cobordisms.}
    \label{fig:FrameComposition}
\end{figure}

\begin{remark}\label{rem:DividedCob}
    When interpreted as a multi-merge cobordism, the construction of $F_{a_0,\ldots,a_m}$ naturally gives it the structure of a divided cobordism as in \cite[Section 3.1]{LLS-KST}:
    \begin{itemize}
        \item $Z$ is the union of the internal boundary components of $F_{a_0,\ldots,a_m}$ and $Z'$ is the union of the external boundary components.
        \item $A$ and $A'$ are the intersections of $Z$ and $Z'$ with the sides of the bridges, so that $I$ and $I'$ are the intersections of $Z$ and $Z'$ with the rails.
        \item $\Gamma$ consists of the set of intervals along which the bridges are glued to the rails.
        \item When removing $\Gamma$, resulting in a disjoint union of bridges and rails, the bridges are the rectangles in item (I) of the definition of divided cobordism in \cite{LLS-KST}, and the rails are the higher polygons in item (II).
    \end{itemize}
    This structure is shown in Figure~\ref{fig:BasicFrame} using the visual conventions of \cite[Figure 3.2]{LLS-KST}. A general divided cobordism looks like a frame $F_{a_0,\ldots,a_m}$, except that the bridges are not required to be organized in neat sets of crossingless matchings along the first dimension of $[0,1] \times (0,1) \times [0,1]$. Our approach is to avoid divided cobordisms and their isotopies by working with frames instead when needed; the rescalings of frames we allowed are very mild in comparison with general isotopies.
\end{remark}

\subsubsection{Frames for tangle bimodules}

We will also use a flat-tangle variant of the frames $F_{a_0,\ldots,a_m}$.

\begin{definition}\label{def:FlatTangleFrame}
    Let $a_0,\ldots,a_k \in \mathsf{B}_m$ and $b_0,\ldots,b_l \in \mathsf{B}_n$. Let $T$ be a flat $(2m,2n)$-tangle diagram. The \new{flat tangle frame} $F_{a_0,\ldots,a_k,T,b_0,\ldots,b_l}$ associated to this data is the subset of $[0,1] \times (0,1) \times [0,1]$ defined as follows.
    \begin{itemize}
        \item Start with $F_{a_0,\ldots,a_m}$, scaled in the first dimension to live in
        \[
            [0,1/3] \times (0,1) \times [0,1].
        \]
        \item Take the union with $F_{b_0,\ldots,b_l}$, scaled in the first dimension to live in
        \[
            [2/3,1] \times (0,1) \times [0,1].
        \]
        \item Take the additional union with the set of points $(x,y,z) \in [1/3,2/3] \times (0,1) \times [0,1]$ such that $(x,y) \in T^{1/3,2/3}$ (the shrinking of $T$ along the first dimension to live in $[1/3, 2/3] \times (0,1)$) and $z \in [1 - \varepsilon,1]$ , where $\varepsilon$ is one common rail height chosen for both $F_{a_0,\ldots,a_k}$ and $F_{b_0,\ldots,b_l}$.
        \end{itemize}
\end{definition}

More generally, there is a variant of the frame construction for non-flat tangle diagrams.

\begin{definition}
    Let $a_0,\ldots,a_k \in \mathsf{B}_m$ and $b_0,\ldots,b_l \in \mathsf{B}_n$. Let $T$ be a $(2m,2n)$-tangle diagram with $N$ crossings, and let $v \to w$ be a morphism in $\underline{2}^N$. 
    The \new{general tangle frame} $F_{v \to w; a_0,\ldots,a_k,T,b_1,\ldots,b_l}$ associated to this data is defined the same way as $F_{a_0,\ldots,a_k,T,b_1,\ldots,b_l}$ except in the region $[1/3,2/3] \times (0,1) \times [0,1]$. In this region, rather than taking $(x,y,z) \in [1/3, 2/3] \times (0,1) \times [0,1]$ with $(x,y) \in T^{1/3,2/3}$ and $z \in [1 - \varepsilon,1]$, we take a standard cobordism from $T^{1/3,2/3}_v \times \{1-\varepsilon\}$ to $T^{1/3,2/3}_w \times \{1\}$ made up of standard saddles localized to the crossings being changed in going from $v$ to $w$. See Figure~\ref{fig:GeneralFrame}. 
\end{definition}

\begin{figure}
    \centering
    \includegraphics[width=\textwidth]{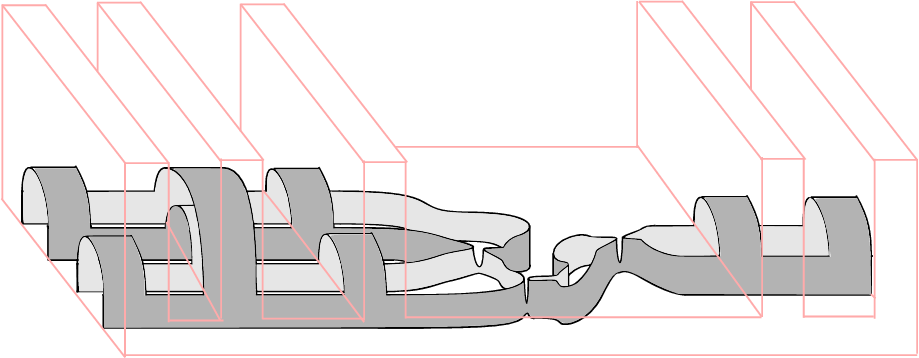}
    \caption{A general tangle frame $F_{v \to w; a_0,\ldots,a_k,T,b_1,\ldots,b_l}$.}
    \label{fig:GeneralFrame}
\end{figure}

\subsection{Mapping directly from the shape multicategory to the Burnside multicategory}

In this section we prove Theorem~\ref{thm:DirectlyToBurnside}, formulated using the more general shape multicategory $\widetilde{(\underline{2}^N \widetilde{\times} {_m}\mathcal{T}_n)^0}$. This version implies the corresponding versions with domain $\cal{S}_n$ or ${_{m}}\mathcal{T}_n$.

\begin{definition}\label{def:MultifunctorSl2Case}
    Define a multifunctor
    \[
        \Phi_n \colon \widetilde{(\underline{2}^N \widetilde{\times} {_m}\mathcal{T}_n)^0} \to \underline{\mathscr{B}}
    \]
    as follows.
    \begin{itemize}

        \item Objects $(a,b)$ with $a,b \in \mathsf{B}_m$ or $a,b \in \mathsf{B}_n$ get sent to $\mathrm{Disks}(a,b)$.

        \item Objects $(v,a,T,b)$ with $a \in \mathsf{B}_m$, $b \in \mathsf{B}_n$, and $v \in \underline{2}^N$ get sent to $\mathrm{Disks}(a,T_v,b)$ where, as specified in Section~\ref{sec:TangleDiagrams}, $T_v$ is the flat tangle diagram obtained from $T$ by resolving crossings according to $v$.
        
        \item A 1-multimorphism in in $\widetilde{(\underline{2}^N \widetilde{\times} {_m}\mathcal{T}_n)^0}$, either
        \[
            (a_0,a_1),\ldots,(a_{k-1},a_k) \to (a_0,a_k)
        \]
        (where $a_0,\ldots,a_k$ are all in $\mathsf{B}_m$ or all in $\mathsf{B}_n$) or
        \[
            (a_0,a_1),\ldots,(a_{k-1},a_k), (v,a_k,T,b_0), (b_0,b_1), \ldots, (b_{l-1},b_l) \to (w,a_0,T,b_l),
        \]
       is specified by a rooted plane tree $T$ with $k$ non-stump leaves. If $T$ has only one internal vertex, then the 1-multimorphism gets sent to a correspondence, either
        \[
            A \colon \mathrm{Disks}(a_0,a_1) \times \cdots \times \mathrm{Disks}(a_{k-1}, a_k) \to \mathrm{Disks}(a_0,a_k)
        \]
        or
        \begin{align*}
            &A \colon \mathrm{Disks}(a_0,a_1) \times \cdots \times \mathrm{Disks}(a_{k-1},a_k) \\
            &\times \mathrm{Disks}(a_k, T_v, b_0) \\
            &\times \mathrm{Disks}(b_0,b_1) \times \cdots \times \mathrm{Disks}(b_{l-1},b_l) \\
            &\to \mathrm{Disks}(a_0,T_w,b_l),
        \end{align*}
        that we now specify. Let $F$ be either $F_{a_0,\ldots,a_k}$ or $F_{v \to w; a_0,\ldots,a_k,T,b_0,\ldots,b_l}$ as appropriate.
        
        The frame $F$, as an abstract surface, may have various connected components, each of some genus. If $F$ has any components of genus $\geq 2$, every entry of the matrix $A$ is defined to be empty.
        
        Otherwise, $F$ may have some components of genus zero and some components of genus one. Say we have either $X_1 \in \mathrm{Disks}(a_0,a_1),\ldots, X_k \in \mathrm{Disks}(a_{k-1},a_k)$ or
        \begin{align*}
            &X_1 \in \mathrm{Disks}(a_0,a_1), \ldots, X_k \in \mathrm{Disks}(a_{k-1},a_k), \\
            &V \in \mathrm{Disks}(a_k,T,b_0), \\
            &Y_1 \in \mathrm{Disks}(b_0,b_1), \ldots, Y_l \in \mathrm{Disks}(b_{l-1},b_l)
        \end{align*}
        indexing a column of $A$ and $Z \in \mathrm{Disks}(a_0,a_k)$ indexing a row of $A$. Viewing $F$ as a multimerge cobordism, we obtain an abstract closed surface $\widehat{F}$ by gluing the dotted disks in $\{X_i\}$ or $\{X_i,V,Y_j\}$ to the internal boundary components of $F$ and the dot-reversal of the dotted disks in $Z$ (zero dots changed to one dot and vice-versa) to the outer boundary of $F$. 
        
        Connected components of $\widehat{F}$ naturally correspond to connected components of $F$, so for each component $\Sigma$ of $F$, we can look at the total number of dots on the corresponding component $\widehat{\Sigma}$ of $\widehat{F}$. If any genus-zero component does not have exactly one dot in total, or if any genus-one component does not have exactly zero dots in total, the entry of $A$ in the column and row in question is defined to be empty.

        In the remaining case, this entry of $A$ will be a Cartesian product of finite sets $S_{\Sigma}$ of size two, one for each genus-one component $\Sigma$ of $F$. Let $\Sigma$ be such a component, and let $\overline{\Sigma}$ denote the union of $\Sigma$ with the disks in $\{X_i\}$ or $\{X_i,V,Y_j\}$. Note that $\overline{\Sigma}$ is an embedded subset of $[0,1] \times (0,1) \times [0,1]$ whose dependence on the choices of representatives for the isotopy classes $\{X_i\}$ or $\{X_i,V,Y_j\}$ is relatively mild. 

        Consider the checkerboard coloring of the components of
        \[
            ([0,1] \times (0,1) \times [0,1]) \setminus \left( F \cup X_1 \cup \cdots \cup X_k \right)
         \]
         or
         \[
             ([0,1] \times (0,1) \times [0,1]) \setminus \left( F \cup X_1 \cup \cdots \cup X_k \cup V \cup Y_1 \cup \cdots \cup Y_l\right)
         \]
        in which $(x,y,z) \in [0,1] \times (0,1) \times [0,1]$ is colored white for $y$ very close to $0$ (or $1$, equivalently). By restricting the coloring to a small open neighborhood of $\overline{\Sigma}$ and extending the result globally, we get a checkerboard coloring of the components of
        \[
            ([0,1] \times (0,1) \times [0,1]) \setminus \overline{\Sigma}.
        \]
        Given the assumptions, the black region $B$ of this $\Sigma$-dependent checkerboard coloring will have first homology group isomorphic to $\mathbb{Z}$. In this case, we take $S_{\Sigma}$ to be the set consisting of the two possible generators for $H_1(B)$. The only choices we made were representatives for the isotopy classes $\{X_i\}$ or $\{X_i,V,Y_j\}$, and for any two such sets of choices there is an evident identification between the corresponding sets $S_{\Sigma}$. The matrix entry of $A$ in the row and column in question is now defined to be the product of the sets $S_{\Sigma}$ over the genus-one components $\Sigma$ of $F$.

        \item If the tree $T$ in the previous item has more than one internal vertex, each vertex of $T$ gets assigned a 1-multimorphism in $\underline{\mathscr{B}}$ by the previous item. Then $T$ uniquely specifies a combined 1-multimorphism obtained by repeated multicomposition in $\underline{\mathscr{B}}$ of the 1-multimorphisms for the vertices of $T$. 
        
        The entry of this multicomposition in column $\{X_i\}$ or $\{X_i,V,Y_j\}$ and row $Z$ can be described as the set of assignments of a label in $\{1,X\}$ to each boundary circle of the frame $F_{\nu}$ associated to each internal vertex ${\nu}$ of $T$ (if two of these circles are the same in $[0,1] \times (0,1) \times [0,1]$ then they must have the same label), agreeing with the subset of the labels that are determined by $\{X_i\}$ or $\{X_i,V,Y_j\}$ and $Z$, such that no $F_{\nu}$ has any genus $\geq 2$ components or components $\Sigma$ assigned the wrong number of dots by the collection of ``dot / no dot'' labels, together with a choice for every $\nu$ of an element of the labeling-determined entry of the local correspondence $A_{\nu}$ associated to $\nu$. These conditions imply that at each step of the process of gluing together the local frames $F_{\nu}$, we never produce a genus-zero or genus-one component with the wrong number of dots.
        
        \item For a change-of-tree 2-morphism in $\widetilde{(\underline{2}^N \widetilde{\times} {_m}\mathcal{T}_n)^0}$ from a tree $T$ to the corresponding basic tree $T_0$, the entrywise bijection $\phi_{T \to T_0}$ from the correspondence $A$ associated to $T$ to the correspondence $A_0$ associated to $T_0$ is defined as follows. Let 
        \[
            F = F_{a_0,\ldots,a_k}
        \]
        or
        \[
            F = F_{v \to w; a_0,\ldots,a_k,T,b_0,\ldots,b_l}
        \]
        be the frame associated to the basic tree $T_0$.
        
        If $F$ has any component $\Sigma$ of genus $\geq 2$, then every entry of $A_0$ is empty. For the entries of $A$, note that we can view $F$ as being glued along sets of circles from the local frames $F_{\nu}$ for the internal vertices $\nu$ of $T$. Let $\Sigma_{\nu}$ be the intersection of $F_{\nu}$ with the genus-two component $\Sigma$ of $F$.

        If any of the surfaces $\Sigma_{\nu}$ have genus $\geq 2$, then every entry of $A$ is also empty. Otherwise, when successively gluing the surfaces $\Sigma_{\nu}$ along sets of circles, at some point we either glue a genus-one component to a genus-one component, or we glue a genus-one component to a genus-zero component in a way that produces a genus-two component.

        Suppose that when successively gluing the $\Sigma_{\nu}$, at some point we glue two genus-one components together. For every nonempty entry of $A$, the total number of dots assigned by the corresponding ``dot / no dot'' labeling to each genus-one component in the gluing process must be zero, and all genus-one components must have all input circles labeled ``no dot'' and all output circles labeled ``dot.'' This is impossible when an input circle of some genus-one component is an output circle for some other genus-one component, so in this case the relevant entry of $A$ is empty.

        On the other hand, say that at some point in gluing together $\Sigma$, we glue together a genus-one and a genus-zero component and produce a genus-two component. Without loss of generality, the genus-one component is the input and the genus-zero component is the output. Since we produced a genus-two component, at least two output circles of the genus-one component agree with two input circles of the genus-zero component. To have a contribution to a nonempty entry of $A$, these two circles must be labeled $X$. But then the number of dots on the genus-zero component is at least two, a contradiction unless the relevant entry of $A$ is empty.

        In all these cases we take $\phi_{T \to T_0}$ to be the empty bijection between empty correspondences.
        
        Now suppose that all components of $F$ have genus either zero or one; the same is then true for the components of each $F_{\nu}$. Say we have $\{X_i\}$ or $\{X_i,V,Y_j\}$, and $Z$, such that the corresponding entry of $A_0$ is nonzero (if no ``dot / no dot'' labelings of the boundary circles of $F$ give a nonempty entry of $A_0$, then the same is true for $A$ whose entries involve more extensive choices of labelings than for $A_0$). 

        Elements of the entry of $A_0$ consist of choices, for each genus-one component $\Sigma$ of $F$, of one of the two possible generators for $H_1$ of the black region of the checkerboard coloring induced on $[0,1] \times (0,1) \times [0,1] \setminus \overline{\Sigma}$. We want a bijection between these elements and elements of the corresponding entry of $A$.

        Let $\Sigma$ be a component of $F$ and, for each internal vertex $\nu$ of $T$, let $\Sigma_{\nu} = \Sigma \cap F_{\nu}$. If $\Sigma$ has genus zero, then each $\Sigma_{\nu}$ has genus zero, and there is a unique valid assignment of ``dot / no dot'' labels to the boundary circles of all the surfaces $\Sigma_{\nu}$ extending the given labeling on boundary circles of $F$. Thus, to get an element of the entry of $A$ in question, we do not need to make choices involving $\Sigma$.

        Now assume $\Sigma$ has genus one, and furthermore there is some $\Sigma_{\nu}$ with genus one (all the rest must have genus zero). In this case, a generator for $H_1$ of the black region associated to $\Sigma_{\nu}$ canonically determines (by inclusion) a generator for $H_1$ of the black region associated to $\Sigma$. This choice of generator is the only choice we must make involving $\Sigma$ when producing an element of the entry of $A$; the labels on the boundary circles of all the $\Sigma_{\nu}$ are again uniquely determined in this case.

        Finally, suppose $\Sigma$ has genus one but all $\Sigma_{\nu}$ have genus zero. The given labeling on boundary circles of $F$ admits exactly two extensions to valid labelings of the boundary circles of each $\Sigma_{\nu}$, which can be described as follows. Form a graph whose vertices are the connected components of each $\Sigma_{\nu}$, with an edge from one connected component to another for each circle in their intersection. This graph has a unique minimal cycle up to cyclic reordering and direction reversal, involving a unique set of vertices $\Sigma_{\nu_1},\ldots,\Sigma_{\nu_{2q}}$ and unique edges between $\Sigma_{\nu_i}$ and $\Sigma_{\nu_{i+1}}$ for each $i \in \Z/2q\Z$. These unique edges each correspond to a circle and these circles must be labeled ``no dot, dot, no dot, dot, $\ldots$'' in alternating order as one traverses the cycle. There are two valid choices of labelings; given one choice, the other has ``dot'' swapped with ``no dot'' for every edge in the minimal cycle. To get an element of the entry of $A$ in question, the choice we need to make relating to $\Sigma$ is the choice of one of these two labelings.

        To connect with the choice we made relating to $\Sigma$ when defining an entry of $A_0$, we use a bijection between the above set of two labelings and the set of two possible generators for the black region of $[0,1] \times (0,1) \times [0,1] \setminus \overline{\Sigma}$. Given the labeling, look at any of the edges in the minimal cycle that are labeled ``no dot'' rather than ``dot'' (it does not matter which). If the pushoff $C_b$ of the corresponding circle $C \in \partial \Sigma_{\nu}$, oriented as the boundary of the corresponding 2d black region of the plane, into the 3d black region $B$ of $[0,1] \times (0,1) \times [0,1] \setminus \overline{\Sigma}$ is homologically nontrivial, then we take $[C_b]$ as the chosen generator of $H_1(B)$. 
        
        On the other hand, if $C_b$ is homotopically trivial, we choose a curve $D$ in $\Sigma$ satisfying $D \cdot C = +1$, where the orientation on $\Sigma$ is compatible with the orientation on $\overline{\Sigma}$ as the boundary of $B$. The pushoff $D_b$ of $D$ into $B$ is homologically nontrivial, and we take $[D_b]$ as the chosen generator of $H_1(B)$.
            
        \item For a general change-of-tree 2-morphism in $\widetilde{(\underline{2}^N \widetilde{\times} {_m}\mathcal{T}_n)^0}$ from a tree $T$ to a tree $T'$, we let
        \[
            \phi_{T \to T'} := \phi_{T' \to T_0}^{-1} \circ \phi_{T \to T_0}
        \]
        where $T_0$ is the basic tree associated to both $T$ and $T'$.
    \end{itemize}
\end{definition}

\begin{remark}
    For a tangle diagram $T$ with any choice of pox as in \cite[Definition 3.10]{LLS-KST}, Lawson--Lipshitz--Sarkar define a multifunctor $\underline{\mathsf{MB}}_T$ from the sub-multicategory 
    \[
        \underline{2}^N \widetilde{\times} {_m}\mathcal{T}_n \subset \widetilde{(\underline{2}^N \widetilde{\times} {_m}\mathcal{T}_n)^0}
    \]
    to $\underline{\mathscr{B}}$. By construction, $\underline{\mathsf{MB}}_T$ agrees with the restriction to $\underline{2}^N \widetilde{\times} {_m}\mathcal{T}_n$ of the multifunctor from Definition~\ref{def:MultifunctorSl2Case}. It follows from \cite[Theorem 1]{LLS-KST} that Definition~\ref{def:MultifunctorSl2Case} gives a valid multifunctor, naturally isomorphic to $\underline{\mathsf{MB}}_T$; we give an adaptation of the proof below to show explicitly that it can be formulated without the use of Lawson--Lipshitz--Sarkar's divided cobordism category.
\end{remark}

\begin{theorem}\label{thm:Sl2MultifunctorValid}
    Definition~\ref{def:MultifunctorSl2Case} defines a valid multifunctor of groupoid-enriched multicategories from $\widetilde{(\underline{2}^N \widetilde{\times} {_m}\mathcal{T}_n)^0}$ to $\underline{\mathscr{B}}$.
\end{theorem}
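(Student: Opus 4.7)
The approach is to reduce the verification of each multifunctor axiom either to a tautology built into the construction or to a combinatorial-topological lemma about frames. The key structural observation is that each frame carries a canonical divided cobordism structure (Remark~\ref{rem:DividedCob}), and the same holds for flat and general tangle frames. Consequently, the data assigned by Definition~\ref{def:MultifunctorSl2Case} agrees under this correspondence with the data that $\underline{V}_{\mathrm{HKK}} \circ \underline{\mathsf{MB}}_T$ would produce on the corresponding divided cobordism. Thus for the portion of the structure lying in the sub-multicategory $\underline{2}^N \widetilde{\times} {_m}\mathcal{T}_n$, we can import well-definedness directly from \cite[Theorem 1]{LLS-KST}; the content of the theorem is that the construction also works on the larger multicategory with its much richer family of change-of-tree 2-morphisms.

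I would verify the axioms in the following order. First, preservation of multicomposition of 1-multimorphisms is essentially tautological: by construction, a tree $T$ with more than one internal vertex is sent to the iterated multicomposition in $\underline{\mathscr{B}}$ of the correspondences assigned to its vertices, and gluing of trees in $\widetilde{(\underline{2}^N \widetilde{\times} {_m}\mathcal{T}_n)^0}$ corresponds to iterated multicomposition of those correspondences. Preservation of the identity on an object $(a,b)$ follows by inspecting the frame $F_{a,b}$ associated to the identity tree and matching it with the identity correspondence on $\mathrm{Disks}(a,b)$. Well-definedness of the 1-multimorphism assignment on single-vertex trees reduces to showing that the size-two set $S_\Sigma$ of $H_1$-generators of the black region is canonically independent of the chosen isotopy-class representatives of $\{X_i\}$ or $\{X_i,V,Y_j\}$; this follows from the fact that mild isotopies of these disks inside the brick castle induce canonical identifications of the relevant first homology groups.

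The main obstacle is the 2-morphism part: one must show that the case-analysis defining $\phi_{T \to T_0}$ produces an honest entrywise bijection in every case, and that these assemble into a functor on change-of-tree groupoids. The empty-entry cases are straightforward: one verifies that each enumerated obstruction (a genus-$\geq 2$ composite component; an attempt to glue two genus-one $\Sigma_\nu$'s with forced conflicting dot labels; a genus-zero $\Sigma_\nu$ accumulating two dots from a genus-one neighbor) forces the entry of $A$ to be empty exactly when the entry of $A_0$ is empty. When all $F_\nu$ have components of genus at most one and the relevant entries are nonempty, the core combinatorial claim is this: for each genus-one component $\Sigma$ of $F$ whose pieces $\Sigma_\nu$ are all genus zero, the two valid alternating ``no dot / dot'' labelings along the unique minimal cycle in the gluing graph correspond bijectively to the two generators of $H_1$ of the black region of $[0,1]\times(0,1)\times[0,1]\setminus\overline{\Sigma}$. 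Verifying this correspondence is the technical heart: one shows that the pushoff $C_b$ of a ``no dot''-labeled boundary circle into the black region, or (if $C_b$ is nullhomologous) the pushoff $D_b$ of a transverse dual curve, represents the generator specified by the labeling, and that swapping the labeling swaps the generator. The case where some $\Sigma_\nu$ already has genus one follows from the inclusion-induced isomorphism on $H_1$ of the relevant black regions.

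Once $\phi_{T \to T_0}$ has been shown to be a well-defined bijection, the formula $\phi_{T \to T'} := \phi_{T' \to T_0}^{-1}\circ\phi_{T\to T_0}$ defines the general change-of-tree bijection, well-defined because any two trees representing the same multicomposition share a unique basic tree $T_0$. Vertical composition is immediate: for $T \to T' \to T''$, both $\phi_{T' \to T''}\circ\phi_{T \to T'}$ and $\phi_{T\to T''}$ simplify to $\phi_{T''\to T_0}^{-1}\circ \phi_{T\to T_0}$. Horizontal multicomposition of 2-morphisms is handled by the observation that the basic tree of a horizontal multicomposition is the horizontal multicomposition of basic trees, so the bijection $\phi_{T \to T_0}$ decomposes as the product of local $\phi_{T_\nu \to (T_0)_\nu}$ over the internal vertices of the ambient tree, and the horizontal multicomposition in $\underline{\mathscr{B}}$ is computed entry-by-entry on Cartesian products. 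Identity 2-morphisms are sent to identity bijections by construction. Throughout, the only genuinely new topological input beyond \cite{LLS-KST} is that the class of mild rescalings of frames suffices in place of the full isotopy-invariance of divided cobordisms used in Lawson--Lipshitz--Sarkar's construction.
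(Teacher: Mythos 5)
There is a genuine gap in the horizontal multicomposition step, and it is precisely the part the paper's proof devotes most of its effort to. You assert that ``the basic tree of a horizontal multicomposition is the horizontal multicomposition of basic trees,'' and use this to decompose $\phi_{T \to T_0}$ as a product of local bijections over internal vertices. This claim is false. If $T = T' * (T_1,\ldots,T_m)$, the basic tree $T_0$ associated to $T$ has a \emph{single} internal vertex, whereas $T'_0 * (T_{1,0},\ldots,T_{m,0})$ has $m+1$ internal vertices; these are different trees and are associated to different frames and different correspondences. Without this identification your proposed decomposition of $\phi_{T \to T_0}$ does not even type-check, and the argument that horizontal multicomposition of 2-morphisms is respected collapses.

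What the paper actually proves, and what your proposal is missing, is the factorization
\[
    \phi_{T'*(T_1,\ldots,T_m) \to T_0} = \phi_{T'_0*(T_{1,0},\ldots,T_{m,0}) \to T_0} \circ \bigl(\phi_{T' \to T'_0} * (\phi_{T_1 \to T_{1,0}} \times \cdots \times \phi_{T_m \to T_{m,0}})\bigr),
\]
i.e.\ that $\phi_{T \to T_0}$ factors \emph{through} the intermediate tree $T'_0*(T_{1,0},\ldots,T_{m,0})$ rather than \emph{equaling} a product of pieces. Establishing this requires the three-case analysis in the paper (some $\Sigma_\nu$ has genus one; all $\Sigma_\nu$ genus zero but some coarser piece $\Sigma'$ or $\Sigma_i$ has genus one; all pieces genus zero), tracking how the $H_1$-generator choices and the ``no dot / dot'' labelings are related under inclusion of black regions and under coarse-graining of the labeling. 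Once the factorization is established, one combines it with functoriality of the groupoid multicomposition maps to finish, as the paper does. Your other verifications (identity 1- and 2-morphisms, vertical composition, strict multicomposition of 1-multimorphisms, and the well-definedness checks on $\phi_{T\to T_0}$) track the paper closely or go somewhat beyond it, and are fine, but they are the easy part.

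One secondary point of framing: the paper's remark before the theorem states that the full result (on the canonical groupoid enrichment, not just on $\underline{2}^N\widetilde{\times}{_m}\mathcal{T}_n$) already follows from \cite[Theorem 1]{LLS-KST}, and that the paper's proof is a re-derivation that avoids the divided cobordism machinery. Your introduction frames the content differently — as an extension from the sub-multicategory to the larger one — which is not quite what either the paper or \cite{LLS-KST} claim. This is a misreading of the setup rather than a mathematical error, but it colors the rest of your proposal.
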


\begin{proof}
    First note that we have a functor on each multimorphism groupoid in $\widetilde{(\underline{2}^N \widetilde{\times} {_m}\mathcal{T}_n)^0}$; in other words, the vertical composition 
    \[
        (T \to T'') = (T' \to T'') \circ (T \to T')
    \]
    of change-of-tree 2-morphisms in $\widetilde{(\underline{2}^N \widetilde{\times} {_m}\mathcal{T}_n)^0}$ (where all of $T,T',T''$ have the same underlying basic tree $T_0$) is sent to
    \begin{align*}
        \phi_{T \to T''}    &= \phi_{T'' \to T_0}^{-1} \circ \phi_{T \to T_0} \\
                            &= \phi_{T'' \to T_0}^{-1} \circ \phi_{T' \to T_0} \circ \phi_{T'\to T_0}^{-1} \circ \phi_{T \to T_0} \\
                            &= \phi_{T' \to T''} \circ \phi_{T \to T'},
    \end{align*}
    and an identity change-of-tree 2-morphism $T \to T$ in $\widetilde{(\underline{2}^N \widetilde{\times} {_m}\mathcal{T}_n)^0}$ is sent to
    \[
        \phi_{T \to T} = \phi_{T \to T_0}^{-1} \circ \phi_{T \to T_0} = \id,
    \]
    the identity element in the relevant multimorphism groupoid of $\underline{\mathscr{B}}$.

    Next, for an object of $\widetilde{(\underline{2}^N \widetilde{\times} {_m}\mathcal{T}_n)^0}$ of the form $(a,b)$ where $a,b \in \mathsf{B}_n$ or $a,b \in \mathsf{B}_m$, we look at where the identity 1-multimorphism from $(a,b)$ to itself (given by the single-edge tree) gets sent. Say it gets sent to the correspondence $A$ from $\mathrm{Disks}(a,b)$ to itself. The frame $F_{a,b}$ is a disjoint union of $\#\textrm{Circ}(a,b)$ cylinders, and for $X,Y \in \mathrm{Disks}(a,b)$, if $X \neq Y$ then for at least one of the cylinders the dot patterns will make the matrix entry of $A$ in row $Y$ and column $X$ empty. On the other hand, if $X = Y$, there will be no choices involved in defining this matrix entry of $A$, which will be a one-point set. The result is the identity multimorphism in $\underline{\mathscr{B}}$ from $\mathrm{Disks}(a,b)$ to itself. The above argument holds equally well for an object of the form $(v,a,T,b)$ by replacing $\mathrm{Disks}(a,b)$ with $\mathrm{Disks}(a,T_v,b)$, $F_{a,b}$ with $F_{v \to v; a, T_v, b}$, and $\#\textrm{Circ}(a,b)$ with $\#\textrm{Circ}(a,T_v,b)$.

    Say that we horizontally compose some 1-multimorphisms in $\widetilde{(\underline{2}^N \widetilde{\times} {_m}\mathcal{T}_n)^0}$, which are given by trees $T_1,\ldots,T_m$, and then take the correspondence $A$ associated to the composite tree $T$. Since $T$ is a composite tree, $A$ is defined to be a multicomposition in $\underline{\mathscr{B}}$ of the correspondences for the basic-tree pieces of $T_1, \ldots, T_m$ taken all together. On the other hand, the correspondence associated to $T_i$ is the multicomposition of correspondences for just the basic-tree pieces of $T_i$. Multicomposing each set of correspondences for basic pieces to form the correspondences of $T_1, \ldots, T_m$, and then multicomposing the results, is the same as multicomposing all the basic-piece correspondences together in one step, because multicomposition of 1-multimorphisms in $\underline{\mathscr{B}}$ is strictly associative. Thus, the multifunctor strictly respects multicomposition of 1-morphisms.

    Finally, say that we horizontally compose some 2-morphisms in $\widetilde{(\underline{2}^N \widetilde{\times} {_m}\mathcal{T}_n)^0}$, from trees $T_i$ to $\widetilde{T}_i$ for $1 \leq i \leq m$ and from $T'$ to $\widetilde{T}'$ where $T', \widetilde{T}'$ each have $m$ inputs, and then take the entrywise bijection of correspondences
    \[
        \phi_{T' * (T_1, \ldots, T_m) \to \widetilde{T}' * (\widetilde{T}_1, \ldots, \widetilde{T}_m)}
    \]
    between the correspondences associated to the composite trees. Let $T_0$ be the basic tree associated to both $T' * (T_1,\ldots,T_m)$ and $\widetilde{T}' * (\widetilde{T}_1, \ldots, \widetilde{T}_m)$; we have
    \[
        \phi_{T' * (T_1, \ldots, T_m) \to \widetilde{T}' * (\widetilde{T}_1, \ldots, \widetilde{T}_m)} = \phi_{\widetilde{T}' * (\widetilde{T}_1, \ldots, \widetilde{T}_m) \to T_0}^{-1} \circ \phi_{T' * (T_1, \ldots, T_m) \to T_0}
    \]
    by definition.  
    
    We claim that $\phi_{T'*(T_1,\ldots,T_m) \to T_0}$ can be factored as
    \begin{equation}\label{eq:PhiFactorization}
        \begin{aligned}
            T' * (T_1,\ldots,T_m) & \xrightarrow{\left( \phi_{T' \to T'_0} * (\phi_{T_1 \to T_{1,0}} \times \cdots \times \phi_{T_m \to T_{m,0}} ) \right)} T'_0 * (T_{1,0},\ldots,T_{m,0}) \\
        & \xrightarrow{\phi_{T'_0 * (T_{1,0},\ldots,T_{m,0}) \to T_0}} T_0,
        \end{aligned}
    \end{equation}
    where $T'_0$ (respectively, $T_{i,0}$) are the basic trees associated to $T'$ and $\widetilde{T}'$ (respectively, $T_i$ and $\widetilde{T}_i$). 
    
    Indeed, we can assume that the frame $F$ of $T_0$ has no components of genus $\geq 2$ and that we are looking at a nonempty entry of the correspondence $A$ for $T' * (T_1,\ldots,T_m)$. Let $\Sigma$ be a genus-one component of $F$; we can write $\Sigma = \Sigma' \cup \Sigma_1 \cup \cdots \cup \Sigma_m$ where $\Sigma'$ is the portion of $\Sigma$ local to the tree $T'$ and $\Sigma_i$ is the portion of $\Sigma$ local to the tree $T_i$. The surface $\Sigma'$ is further cut into pieces along sets of circles according to the internal vertices of $T'$, and $\Sigma_i$ is cut into pieces along sets of circles according to the internal vertices of $T_i$. Let $\{\Sigma_{\nu}\}$ denote the set of all these minimal pieces of $\Sigma'$ and the $\Sigma_i$.

    If any $\Sigma_{\nu}$ has a genus-one component, then an element $x$ of the entry of $A$ in question already has its $\Sigma$-choice encoded by a generator of $H_1$ of the black region $B_{\nu}$ associated to $\Sigma_{\nu}$. The map $\phi_{T' * (T_1,\ldots,T_m) \to T_0}$ applied to $x$ gives the entry of $A_0$ whose $\Sigma$-choice is the image of this generator of $H_1(B_{\nu})$ under the inclusion map from $B_{\nu}$ into the black region $B$ associated to $\Sigma$. The composite map of \eqref{eq:PhiFactorization} gives the same entry of $A_0$; it passes an entry of $H_1(B_{\nu})$ through two inclusion maps in turn instead of their composition all at once, but these things are the same, so the factorization of \eqref{eq:PhiFactorization} holds when applied to $x$.

    Next, say all the $\Sigma_{\nu}$ have only genus-zero components, but one of $\{\Sigma',\Sigma_1,\ldots,\Sigma_m\}$ has a genus-one component. An element $x$ of the entry of $A$ in question has its $\Sigma$-choice specified by one of two possible ``no dot, dot, no dot, dot, $\ldots$'' labelings of a minimal cycle in a graph. Applying $\phi_{T' * (T_1,\ldots,T_m) \to T_0}$, we translate this $\Sigma$-choice to a generator of $H_1$ for the black region associated to $\Sigma$. By comparison, when we apply the composite map of \eqref{eq:PhiFactorization} to $x$, the first factor already translates the $\Sigma$-choice of $x$ to a $H_1$ generator of the black region (which is already visible when restricting attention to one of $\{\Sigma',\Sigma_1,\ldots,\Sigma_m\}$), and the second factor applies a transparent inclusion map. Thus, the factorization of \eqref{eq:PhiFactorization} holds when applied to $x$.

    Finally, say all the $\Sigma_{\nu}$ and all of $\{\Sigma',\Sigma_1,\ldots,\Sigma_m\}$ have only genus-zero components. An element $x$ of the entry of $A$ in question again has its $\Sigma$-choice specified by a ``no dot, dot, no dot, dot, $\ldots$'' labeling. When we apply the composite map of \eqref{eq:PhiFactorization} to $x$, the first factor coarse-grains this labeling by restricting it to circles on the boundary of $\Sigma'$ or some $\Sigma_i$. The second factor then translates this coarse-grained labeling into a $H_1$ generator for the black region. By comparison, the map $\phi_{T' * (T_1,\ldots,T_m) \to T_0}$ does the translation all at once, without coarse-graining first. Doing the coarse-graining first does not affect the result, so the factorization of \eqref{eq:PhiFactorization} holds when applied to $x$ and thus in general, proving the claim.

    To complete the proof of the theorem, say that instead of horizontally composing the 2-morphisms in $\widetilde{(\underline{2}^N \widetilde{\times} {_m}\mathcal{T}_n)^0}$ first, and then taking the associated entrywise bijection of correspondences, we take the individual entrywise bijections
    \begin{equation}\label{eq:IndividualEntrywiseBijections}
        \phi_{T' \to \widetilde{T'}}, \quad \phi_{T_1 \to \widetilde{T}_1},  \quad \ldots, \quad \phi_{T_m \to \widetilde{T}_m}
    \end{equation}
    and horizontally multicompose them in $\underline{\mathscr{B}}$. By definition,
    \[
        \phi_{T' \to \widetilde{T'}} = \phi_{\widetilde{T}' \to T'_0}^{-1} \circ \phi_{T' \to T'_0}
    \]
    and
    \[
        \phi_{T_i \to \widetilde{T_i}} = \phi_{\widetilde{T}_i \to T_{i,0}}^{-1} \circ \phi_{T_i \to T_{i,0}}.
    \]
     Since the multicomposition maps in a groupoid-enriched multicategory are functors of groupoids, when we horizontally multicompose the set of vertical compositions in \eqref{eq:IndividualEntrywiseBijections}, we can equivalently write the result as
    \[
        (\phi_{\widetilde{T}' \to T'_0} * (\phi_{\widetilde{T}_1 \to T_{1,0}} \times \cdots \times \phi_{\widetilde{T}_m \to T_{m,0}}))^{-1} \circ (\phi_{T' \to T'_0} * (\phi_{T_1 \to T_{1,0}} \times \cdots \times \phi_{T_m \to T_{m,0}} )),
    \]
    or redundantly as
    \begin{align*}
        & (\phi_{\widetilde{T}' \to T'_0} * (\phi_{\widetilde{T}_1 \to T_{1,0}} \times \cdots \times \phi_{\widetilde{T}_m \to T_{m,0}}))^{-1}\\
        & \circ \phi_{T'_0 * (T_{1,0},\ldots,T_{m,0}) \to T_0}^{-1} \\
        & \circ \phi_{T'_0 * (T_{1,0},\ldots,T_{m,0}) \to T_0} \\
        & \circ (\phi_{T' \to T'_0} * (\phi_{T_1 \to T_{1,0}} \times \cdots \times \phi_{T_m \to T_{m,0}} )).
    \end{align*}
    By equation~\eqref{eq:PhiFactorization} and its $\widetilde{T}$ analogue, we get
    \[
        \phi_{\widetilde{T}' * (\widetilde{T}_1, \ldots, \widetilde{T}_m) \to T_0}^{-1} \circ \phi_{T' * (T_1, \ldots, T_m) \to T_0}
    \]
    which equals $\phi_{T'*(T_1,\ldots,T_m) \to \widetilde{T}' * (\widetilde{T}_1,\ldots, \widetilde{T}_m)}$. It follows that our multifunctor strictly respects horizontal composition of 2-morphisms, so it is a valid multifunctor of groupoid-enriched multicategories.
\end{proof}

\begin{remark}
    We went into detail about the proof that the multifunctor of Definition~\ref{def:MultifunctorSl2Case} respects horizontal composition of 2-morphisms because that property seems to be the main difficulty in defining such a multifunctor. If we did not require this property, for example, we could have defined the entrywise bijections $\phi_{T \to T_0}$ arbitrarily instead of via the canonical procedure of Definition~\ref{def:MultifunctorSl2Case}, and we could also have defined the matrix entries of the correspondences associated to basic trees to be arbitrary finite sets of the right cardinality.
\end{remark}

\section{Blanchet--Khovanov algebras and the categorified quantum group}\label{sec:BlanchetKhovanovAlgebras}

\subsection{Webs and foams}

Following \cite{EST1}, let $\bbl$ denote the set of finite sequences of elements of $\{0,1,2\}$. Given a finite sequence $\vec{k}\in\bbl$, say of length $n$, 
we will view the entries of $\vec{k}$ as labeling the elements of $[n]_{\std} := (1/(n+1),2/(n+1),...,n/(n+1))$.

\begin{definition}[cf.\ Section~2.1 of \cite{EST1}]\label{def:UpwardWeb}
    Given $\vec k_1,\vec k_2\in\bbl$ (say of lengths $n_1$ and $n_2$), an \new{upward-pointing $\mathfrak{gl}_2$-web} (or just \new{$\mathfrak{gl}_2$-web}) $w$ from $\vec{k}_1$ to $\vec{k}_2$ is an oriented trivalent graph embedded in $[0,1] \times (0,1)$ with boundary the subset of $(\{0\} \times [n_1]_{\std}) \cup (\{1\} \times [n_2]_{\std})$ whose corresponding entries of $\vec{k}_i$ are nonzero, equipped with an orientation of each edge such that the first coordinate of $[0,1] \times (0,1)$ is strictly increasing in the direction of the given orientation, and equipped with a labeling of edges as either ``thickness one'' or ``thickness two'' such that each vertex has either two thickness one edges incoming and one thickness two edge outgoing, or the same with incoming and outgoing reversed. We will sometimes write $w \colon \vec{k}_1 \to \vec{k}_2$.
\end{definition}

Locally, $\gl_2$-webs look one of the pieces shown in Figure~\ref{fig:LocalGl2Webs} if we draw the first coordinate bottom to top and the second coordinate left to right (this is one of various standard conventions). All edges are oriented upwards, although we have drawn the orientations only on the thickness-one edges.

\begin{figure}[ht!]
    \centering
    \includegraphics{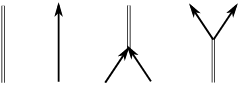}
    \caption{Local models for $\gl_2$-webs.}
    \label{fig:LocalGl2Webs}
\end{figure}

\begin{definition}[cf.\ Section~4.1 of \cite{EST1}]
    If $w \colon \vec{k}_1 \to \vec{k}_2$ is a $\mathfrak{gl}_2$-web, the \new{underlying topological web} $\hat{w}$ of $w$ is the flat tangle obtained by removing all thickness-two edges from $w$.
\end{definition}

The underlying topological webs of the local $\gl_2$-webs in Figure~\ref{fig:LocalGl2Webs} look like
\begin{figure}[ht!]
    \includegraphics{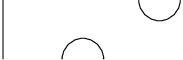}
    \caption{Underlying topological webs for local models for $\gl_2$-webs.}
    \label{fig:LocalTopWebs}
\end{figure}
when drawn with first coordinate bottom to top and second coordinate left to right. We do not consider orientations on underlying topological webs.

For $n \geq m \geq 0$, let $\omega_{n,m} = (1,\ldots,1, 0, \ldots, 0)$ with $n$ entries, the first $m$ of which are $1$. 

\begin{definition}[cf. Definition 2.5 of \cite{EST1}]
    A \new{$\mathfrak{gl}_2$-foam} $F$ (analogous to the pre-foams of \cite{EST1,lau-skew}) is a subset of $[0,1] \times (0,1) \times [0,1]$ that is locally an embedding of a Y shape times an interval, equipped with a labeling of facets as either ``thickness one'' or ``thickness two'' such that in a neighborhood of each singular seam, two legs of the Y shape have thickness one and the other has thickness two, and also equipped with an orientation for each singular seam, such that the intersection of $F$ with each of the four boundary components of $[0,1] \times (0,1) \times [0,1]$ is a $\mathfrak{gl}_2$-web (oriented top to bottom on the left and right faces $\{0,1\} \times (0,1) \times [0,1]$ and oriented left to right on the top and bottom faces $[0,1] \times (0,1) \times \{0,1\}$). Finally, each facet of $F$ should carry some number of dots.
\end{definition}

\begin{remark}
    Our visual conventions imply that ``upward-pointing'' webs are actually drawn pointing downwards when they appear on the left and right sides of the cube $[0,1] \times (0,1) \times [0,1]$.
\end{remark}

In particular, for $i = 1,2$, if $\vec{k}_i \in \bbl$ and $w_i$ are $\mathfrak{gl}_2$-webs from $\vec{k}_1$ to $\vec{k}_2$, a $\mathfrak{gl}_2$-foam $F$ from $w_1$ to $w_2$ is defined to be a $\mathfrak{gl}_2$-foam having the following boundary intersections:
\begin{itemize}
    \item $F \cap (\{0\} \times (0,1) \times [0,1]) = w_1$ (on the left face of the cube pointing downwards);
    \item $F \cap (\{1\} \times (0,1) \times [0,1]) = w_2$ (on the right face of the cube pointing downwards);
    \item $F \cap ([0,1] \times (0,1) \times \{0\})$ is the identity web from $\vec{k}_1$ to itself (on the top face of the cube pointing to the right);
    \item $F \cap ([0,1] \times (0,1) \times \{1\})$ is the identity web from $\vec{k}_2$ to itself (on the bottom face of the cube pointing to the right).
\end{itemize}

\begin{definition}
    If $F$ is a $\mathfrak{gl}_2$-foam, its \new{underlying topological foam} is the dotted cobordism obtained by removing all thickness-two facets, including their boundary intersections, from $F$.
\end{definition}

\subsection{The foam based Blanchet--Khovanov algebras}

Note that an element $\vec{k} \in \bbl$ has an even number of ones if and only if the entries of $\vec{k}$ sum to an even number. Following \cite{EST1}, we will refer to such $\vec{k}$ as ``balanced.''

\begin{definition}\label{def:rlFg}
    Let $\vec{k}$ be a balanced length-$n$ sequence of elements of $\{0,1,2\}$; say $2m$ is the sum of the entries of $\vec{k}$. Define $W_{\vec{k}}$ to be, in the language of \cite[Lemma 4.8]{EST1}, the set of the unique (up to isotopy and we pick one representative per class) \new{$F$-generated webs obtained by preferring right to left}
    \[
        u \colon 2\omega_{n,m} \to \vec{k}
    \]
    whose underlying topological web $\hat{u}$ is a crossingless matching on $2p$ points, where $p$ is the number of entries of $\vec{k}$ that are equal to one. Two examples of such webs $u$ are shown in \cite[equation (37)]{EST1}; another is shown on the right of Figure~\ref{fig:FGeneratedWeb}, with first coordinate bottom to top and second coordinate left to right.
\end{definition}

\begin{figure}
    \centering
    \includegraphics{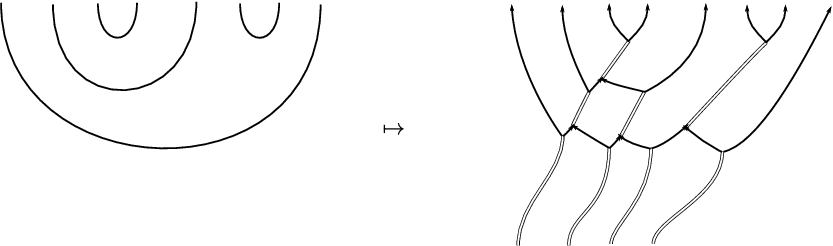}
    \caption{The unique $F$-generated and rightward-preferring web associated to a crossingless matching as in Definition~\ref{def:rlFg}. To construct the web, start with a set of parallel vertical 2-labeled strands below the crossingless matching. Successively take the rightmost available 2-labeled strand at the bottom and connect it to the lowest point of whichever still-available matching arc has rightmost right endpoint. If it is necessary to cross other matching arcs when doing this, always emerge a bit to the right of where you entered (going bottom to top).}
    \label{fig:FGeneratedWeb}
\end{figure}

For any balanced $\vec{k} \in \bbl$, suppose that $\vec{k}$ has an even number of ones, say $2p$ (so that $2m$ is $2p$ plus twice the number of twos in $\vec{k}$). Given a crossingless matching $a \in \mathsf{B}_p$, \cite[Lemma 4.8]{EST1} implies that there is a unique element of $W_{\vec{k}}$  whose underlying topological web is $a$; see Figure~\ref{fig:FGeneratedWeb}. We thus have a bijection between $\mathsf{B}_p$ and $W_{\vec{k}}$.

\begin{remark}
    The important part for us is that given $\vec{k}$ with $2p$ ones and entries summing to $2m$, for each crossingless matching $a \in \mathsf{B}_p$ we choose one web from $2\omega_{n,m}$ to $\vec{k}$ whose underlying topological web is $a$. The $F$-generated and right-preferring webs of \cite{EST1} are one way to do this, but any choice will work equally well for us.
\end{remark}

\begin{definition}[cf.\ Definitions 2.17, 2.19 and 3.11 and equation (40) of \cite{EST1}]\label{def:BKAlg}
    Given $\vec{k} \in \bbl$, let $w_i \in W_{\vec{k}}$ for $i = 1,2$. Define ${_{w_1}}(\mathfrak{W}^{\natural}_{\vec{k}})_{w_2}$ to be the free abelian group with basis formally given by isotopy classes rel boundary of $\mathfrak{gl}_2$-foams from $w_1$ to $w_2$, modulo the local $\mathfrak{gl}_2$-foam relations described in \cite[(8)--(10) and Lemmas 2.12--2.14]{EST1}. Define a multiplication map
    \[
        {_{w_1}}(\mathfrak{W}^{\natural}_{\vec{k}})_{w_2}
        \underset{\mathbb Z}{\otimes}
        {_{w_2}}(\mathfrak{W}^{\natural}_{\vec{k}})_{w_3} \to {_{w_1}}(\mathfrak{W}^{\natural}_{\vec{k}})_{w_3}
    \]
    sending $F \otimes F'$ to the concatenation of $F$ (shrunk to live in $[0,1/2] \times (0,1) \times [0,1]$) with $F'$ (shrunk to live in $[1/2,1] \times (0,1) \times [0,1]$) along $w_2$. By the \new{Blanchet--Khovanov algebra} associated to $\vec{k}$ we will mean the algebra
    \[
        \mathfrak{W}^{\circ}_{\vec{k}} := \bigoplus_{u,v \in W_{\vec{k}}} {_{u}}(\mathfrak{W}^{\natural}_{\vec{k}})_{v};
    \]
    by \cite[Lemma 2.27, Corollary 4.17, and Theorem 4.18]{EST1}, this definition is equivalent to Ehrig--Stroppel--Tubbenhauer's.
\end{definition}

\subsection{The \texorpdfstring{$\mathfrak{gl}_2$-foam}{gl(2)-foam} 2-category}

There is a foam 2-category $\mathfrak{F}$ generalizing Definition~\ref{def:BKAlg}.

\begin{definition}[cf. Definition 2.17 of \cite{EST1}]
     The $\mathfrak{gl}_2$-foam 2-category $\mathfrak{F}$ has set of objects $\bbl$, together with a zero object (Ehrig--Stroppel--Tubbenhauer do not include a zero object, but it will be necessary for the 2-functors we consider). For any $\vec{k}_1, \vec{k}_2$ in $\bbl$, there is a $\mathbb{Z}$-linear category $\mathrm{Hom}_{\mathfrak{F}}(\vec{k}_1, \vec{k_2})$ whose objects are $\mathfrak{gl}_2$-webs $w$ from $\vec{k}_1$ to $\vec{k}_2$ and whose abelian group $2\mathrm{Hom}_{\mathfrak{F}}(w_1,w_2)$ of morphisms from one such web $w_1$ to another $w_2$ is the free abelian group with basis formally given by isotopy classes rel boundary of $\mathfrak{gl}_2$-foams from $w_1$ to $w_2$ modulo the local $\mathfrak{gl}_2$-foam relations described in \cite[(8)--(10) and Lemmas 2.12--2.14]{EST1}. Morphism categories involving the zero object are defined to be zero.
\end{definition}

\begin{remark}
    For $\vec{k} \in \bbl$, the Blanchet--Khovanov algebra associated to $\vec{k}$ is the algebra obtained from the category $\mathrm{Hom}_{\mathfrak{F}}(2\omega_{n,m}, \vec{k})$ by taking the direct sum of all morphism spaces between objects $u,v \in W_{\vec{k}}$.
\end{remark}

Relatedly, in Queffelec--Rose \cite[Definitions 3.1 and 3.5]{QR} there is a definition of a foam 2-category $m\mathrm{Foam}_n(N)$ for general $n,m,N$ (we have swapped the roles of $m$ and $n$ in their indexing).  

\begin{proposition}
    There is an isomorphism of 2-categories $\bigoplus_{N, n \geq 0} 2\mathrm{Foam}_n(N) \to \mathfrak{F}$ which is the identity on objects, morphisms, and 2-morphisms.  
\end{proposition}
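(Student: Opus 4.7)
The plan is to unwind the definitions on both sides and observe that the two 2-categories are, up to bookkeeping, built from the same generators subject to the same relations. Since the claimed isomorphism is the identity on all three levels, the statement amounts to verifying that the data of $\bigoplus_{N,n \geq 0} 2\mathrm{Foam}_n(N)$ and of $\mathfrak{F}$ coincide literally, not merely up to equivalence.

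First I would match objects. An object of $\bigoplus_{N,n\geq 0} 2\mathrm{Foam}_n(N)$ consists of a length $n$, a total weight $N$, and a sequence $\vec{k} \in \{0,1,2\}^n$ with $\sum_i k_i = N$; this is exactly an element of $\bbl$ together with its (redundant) length and weight. The zero object is adjoined on both sides. Next I would match 1-morphism categories: by \cite[Definition 3.1]{QR}, the 1-morphisms in $2\mathrm{Foam}_n(N)(\vec{k}_1, \vec{k}_2)$ are $\mathfrak{gl}_2$-webs from $\vec{k}_1$ to $\vec{k}_2$ with horizontal concatenation as composition, which is definitionally what appears in $\mathrm{Hom}_{\mathfrak{F}}(\vec{k}_1, \vec{k}_2)$ from Definition~\ref{def:UpwardWeb} and the preceding material. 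Orientation and thickness conventions line up after one sanity check of Figure~\ref{fig:LocalGl2Webs} against the local models of \cite{QR}.

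The substantive step is matching 2-morphism spaces. Both sides are the free abelian group on isotopy classes rel boundary of $\mathfrak{gl}_2$-foams between the given webs, modulo local relations. I would verify that the Queffelec--Rose relations (dot migration across seams, neck-cutting, bubble/theta evaluation, the digon and square ``Frobenius'' relations, and the sign rule for dots sliding through thickness-two facets) cut out the same subgroup as Ehrig--Stroppel--Tubbenhauer's relations \cite[(8)--(10) and Lemmas 2.12--2.14]{EST1}. Both presentations are known to realize the same $\mathfrak{gl}_2$-foam TQFT categorifying the Reshetikhin--Turaev $\mathfrak{gl}_2$ link polynomial; the explicit identification proceeds relation by relation, deriving each EST relation as a consequence of the QR relations and vice versa. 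Horizontal and vertical composition of 2-morphisms then automatically agree since both are defined by stacking foams inside the cube, using the same coordinates.

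The main obstacle will be the sign and normalization bookkeeping in the relation-matching step. Since the entire point of the Blanchet/foam upgrade is to fix signs, even small convention differences — for instance, which direction produces a $(-1)$ when a dot migrates across a thickness-two facet, or the orientation convention on singular seams — can appear to produce distinct relations. One resolves these by tracking a small set of ``test'' foams (a dotted 2-labelled disk, a theta foam, a neck-cut cylinder) and confirming their evaluations agree on both sides; once these are pinned down, every other relation is forced. With signs reconciled, the identity assignment extends to a strict 2-functor whose inverse is again the identity, giving the isomorphism.
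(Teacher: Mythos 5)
Your proposal misidentifies where the content of the proposition lives, and consequently has a gap in the one place that actually needs an argument.

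You treat the relation-matching step as the substantive work (``deriving each EST relation as a consequence of the QR relations and vice versa,'' tracking a few test foams, etc.). The paper instead treats this as a non-issue: both $2\mathrm{Foam}_n(N)$ and $\mathfrak{F}$ explicitly impose \emph{the same} Blanchet foam relations, and both cite \cite{Blan}; there are no two independent presentations to reconcile. Nothing needs to be ``derived'' relation by relation. If you genuinely believed the relation sets could differ up to sign conventions, then the identity map on 2-morphisms would not even be well-defined, and the proposition as stated (identity on all levels) would be false; so you would need to resolve this before anything else, not as a bookkeeping afterthought.

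The actual gap is in a step you silently assume. You write that ``both sides are the free abelian group on isotopy classes rel boundary of $\mathfrak{gl}_2$-foams between the given webs, modulo local relations,'' as if the underlying generating sets already coincide. They do not obviously coincide: $\mathfrak{F}$ is defined as a \emph{full} sub-2-category on upward-directed webs, so its 2-morphism spaces include every foam between two upward webs, including foams whose intermediate horizontal slices pass through non-upward-directed webs. Such foams need not a priori appear among the 2-morphisms of $2\mathrm{Foam}_n(N)$. The content of the proposition is exactly to show that this apparent enlargement is illusory — the paper's argument being that, modulo isotopy and the evaluation of any closed web components that arise in intermediate slices, every such foam in $\mathfrak{F}$ is already equal to one coming from $2\mathrm{Foam}$. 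Without addressing this, the ``identity on 2-morphisms'' functor $\mathfrak{F} \to \bigoplus 2\mathrm{Foam}_n(N)$ is not even known to be well-defined, let alone an isomorphism.
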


\begin{proof}
    Observe that the relations for $2\mathrm{Foam}_n(N)$ are just the Blanchet foam relations \cite{Blan} used in \cite{EST1}. Hence, the $2\mathrm{Foam}$ 2-categories are almost identical to the 2-category $\mathfrak{F}$, the only difference being that in taking the full sub 2-category in defining $\mathfrak{F}$ allows for the possibility of foams between upward directed webs that factor through non-upward directed webs.   Since we quotient by isotopy and every closed component can be evaluated, it is clear that every 2-morphism in $\mathfrak{F}$ is equal to one from $2\mathrm{Foam}$.
\end{proof}

\subsection{The categorified quantum group}

\begin{definition}[cf.\ Definition 2.1 and Section 2.3.3 of \cite{QR}] 
    The categorified quantum group $\cal{U}_Q(\mathfrak{gl}_n)$ is the $\mathbb{Z}$-graded $\mathbb{Z}$-linear 2-category defined as follows.
    \begin{itemize}
        \item The set of objects of $\cal{U}_Q(\mathfrak{gl}_n)$ is $\mathbb{Z}^n$; we refer to elements $\vec{k} \in \mathbb{Z}^n$ as $\mathfrak{gl}_n$-weights.
    
        \item 1-morphisms of $\cal{U}_Q(\mathfrak{gl}_n)$ are defined to be composable sequences of identity 1-morphisms $\mathbf{1}_{\vec{k}}$ (for objects $\vec{k} \in \mathbb{Z}^n$) and, for $1 \leq i \leq n-1$, basic 1-morphisms
        \[
            \mathbf{1}_{\vec{k}+\alpha_i}\cal{E}_i\mathbf{1}_{\vec{k}} \colon \vec{k} \to \vec{k} + \alpha_i, \qquad \mathbf{1}_{\vec{k}-\alpha_i}\cal{F}_i\mathbf{1}_{\vec{k}} \colon \vec{k} \to \vec{k} - \alpha_i
        \]
        where $\alpha_i \in \mathbb{Z}^n$ is $(0,\ldots,0,1,-1,0,\ldots,0)$ with $1$ in position $i$.
    
        \item 2-morphisms of $\cal{U}_Q(\mathfrak{gl}_n)$ are generated under vertical and horizontal composition by the basic 2-morphisms listed below:
        
    \begin{centering}
    \[
        \raisebox{-.5\height}{\includegraphics{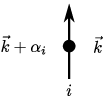}}\colon \mathbf{1}_{\vec{k}+\alpha_i}\cal{E}_i\mathbf{1}_{\vec{k}} \to \mathbf{1}_{\vec{k}+\alpha_i}\cal{E}_i\mathbf{1}_{\vec{k}}, \quad \raisebox{-.5\height}{\includegraphics{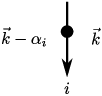}}\colon \mathbf{1}_{\vec{k}-\alpha_i}\cal{F}_i\mathbf{1}_{\vec{k}} \to \mathbf{1}_{\vec{k}-\alpha_i}\cal{F}_i\mathbf{1}_{\vec{k}},
    \]
    \[
        \raisebox{-.5\height}{\includegraphics{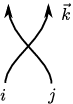}} \colon \cal{E}_i \cal{E}_j \mathbf{1}_{\vec{k}} \to \cal{E}_j \cal{E}_i \mathbf{1}_{\vec{k}}, \qquad \qquad \raisebox{-.5\height}{\includegraphics{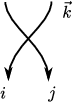}}\colon \cal{F}_i \cal{F}_j \mathbf{1}_{\vec{k}} \to \cal{F}_j \cal{F}_i \mathbf{1}_{\vec{k}},
    \]
    \[
        \raisebox{-.5\height}{\includegraphics{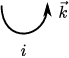}} \colon \mathbf{1}_{\vec{k}} \to \cal{F}_i \cal{E}_i \mathbf{1}_{\vec{k}}, \qquad \raisebox{-.5\height}{\includegraphics{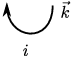}}\colon\mathbf{1}_{\vec{k}} \to \cal{E}_i \cal{F}_i \mathbf{1}_{\vec{k}},
    \]
    \[
        \raisebox{-.5\height}{\includegraphics{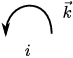}} \colon \cal{F}_i \cal{E}_i  \mathbf{1}_{\vec{k}} \to \mathbf{1}_{\vec{k}}, \qquad \raisebox{-.5\height}{\includegraphics{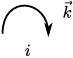}}\colon  \cal{E}_i \cal{F}_i \mathbf{1}_{\vec{k}} \to \mathbf{1}_{\vec{k}}.
    \]
    \end{centering}
    
    The dot maps have degree $2$; the crossing maps have degree $-a_{ij}$ where 
    \[
         a_{ij} =   \begin{cases}
                        2 & \text{if $i=j$}, \\
                        -1& \text{if $|i-j|=1$}, \\
                        0 & \text{otherwise}. 
                    \end{cases}
    \]
    The cup and cap maps on the left have degree $1 + k_i - k_{i+1}$, while the ones on the right have degree $1 - k_i + k_{i+1}$. We then impose the relations in items 1 through 6 of \cite[Definition 2.1]{QR}. In these relations, one should take $t_{i,i+1} = -1$, $t_{i+1,i} = 1$, and $t_{i,j} = 1$ for $|i-j| \geq 2$; note that with these choices of $t_{i,j}$ the definition in \cite{QR} works over $\mathbb{Z}$ as well as over a field.
    \end{itemize}
\end{definition} 

\begin{remark}
    As mentioned in \cite[Section 2.3.3]{QR}, for any $N$, one can identify the 2-category $\cal{U}_Q(\mathfrak{sl}_n)$ as defined in \cite[Definition 2.1]{QR} with the full sub-2-category of $\cal{U}_Q(\mathfrak{gl}_n)$ as defined above on objects $\vec{k}$ whose sum of entries is $N$. 
    The translation between $\mathfrak{sl}_n$ weights in $\mathbb{Z}^{n-1}$ and $\mathfrak{gl}_n$ weights in $\mathbb{Z}^n$, given $N$, associates to an $\mathfrak{sl}_n$ weight $\lambda = (\lambda_1,\ldots,\lambda_{n-1})$ the unique $\mathfrak{gl}_n$ weight $\vec{k}$ such that $\lambda_i = k_i - k_{i+1}$ for $1 \leq i \leq m-1$ and $\sum_{i=1}^n k_i = N$.
\end{remark}

\begin{remark}
    Following Remark~\ref{rem:SuppressQuantumGradings}, we will suppress mention of the grading on 2-morphisms in $\cal{U}_Q(\mathfrak{gl}_n)$ below.
\end{remark}

\begin{proposition}[cf.\ Lemma 3.7 and Theorem 3.9 of \cite{QR}]\label{prop:FoamationFunctor}
    There is a ``foamation'' 2-functor 
    \[
        \Phi_2 \colon \cal{U}_Q(\mathfrak{gl}_n) \to \bigoplus_N 2\mathrm{Foam}_n(N)
    \]
    defined as follows.
    \begin{itemize}
        \item For objects of $\cal{U}_Q(\mathfrak{gl}_n)$, viewed as $\mathfrak{gl}_n$ weights $\vec{k} \in \mathbb{Z}^n$, if all entries of $\vec{k}$ are in $\{0,1,2\}$ then $\Phi_2$ sends $\vec{k}$ to $\vec{k}$ viewed as an object of $2\mathrm{Foam}_n(N)$ where $N$ is the sum of entries of $\vec{k}$. If $\vec{k}$ has any entries outside $\{0,1,2\}$, then $\Phi_2$ sends $\vec{k}$ to the zero object of $2\mathrm{Foam}_n(N)$. %
        \item $\Phi_2$ sends basic 1-morphisms of the form $\cal{E}_i$ or $\cal{F}_i$ to the ladder webs specified in \cite[equation (2.9)]{QR}, where $a_i$ in \cite{QR} is our $k_i$. Our first coordinate is drawn right to left and our second coordinate is drawn top to bottom in \cite[equation (2.9)]{QR}. 
        \item $\Phi_2$ sends generating 2-morphisms in $\cal{U}_Q(\mathfrak{gl}_n)$ to the foams specified in \cite[Lemma 3.7 and Theorem 3.9]{QR}.
    \end{itemize}
\end{proposition}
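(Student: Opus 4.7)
The plan is to verify that the assignment $\Phi_2$ on generators (objects, 1-morphisms, and generating 2-morphisms) extends to a well-defined 2-functor, by checking that every defining relation of $\cal{U}_Q(\mathfrak{gl}_n)$ holds between the images in $\bigoplus_N 2\mathrm{Foam}_n(N)$. On objects, the assignment is determined; on 1-morphisms, composability is immediate from the definition of ladder webs in \cite[equation (2.9)]{QR}, whose sources and targets coincide with the images of $\cal{E}_i\mathbf{1}_{\vec k}$ and $\cal{F}_i\mathbf{1}_{\vec k}$ under $\Phi_2$. The remaining content is checking, item-by-item, that the relations (1)--(6) of \cite[Definition 2.1]{QR} are satisfied by the indicated foams.

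First I would handle the local pieces that stay within a single $\mathfrak{sl}_2$-strand or pair of adjacent strands. The nilHecke relations on the upward-pointing $\cal{E}$-region translate, after foamation, into the Blanchet foam identities that encode dot-slides across 2-labeled facets and the ``neck-cutting with dots'' identity; this is essentially the content of \cite[Lemma 3.7]{QR}, and the sign match between the nilHecke relation and its foamic image is exactly what Blanchet's 2-labeled facets were designed to achieve (cf.\ \Cref{fig:nil2neck-blanchet}). The biadjointness (cups and caps) relations and the bubble/curl identities then reduce, after applying the neck-cutting relation to the image foams, to purely topological identities for dotted disks and cylinders modulo the local foam relations in \cite[(8)--(10) and Lemmas 2.12--2.14]{EST1}. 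The mixed $\cal{E}\cal{F}$-relation (the extended $\mathfrak{sl}_2$ relation with its dependence on the weight $\vec k$) follows by checking both positive and negative weight cases and showing that the image of each summand in the direct-sum decomposition is realized by an explicit foam built from cups, caps, and dotted bubbles, matching the signs prescribed by the chosen $t_{i,j}$.

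Next I would address the genuinely non-local relations: the mixed crossings and the $\cal{E}_i\cal{E}_j$ relations when $|i-j|=1$ (the braid-like and KLR-type mixed relations), as well as the sign-compatibility of the crossings under reordering. Here the substance is a case analysis on the relative positions of the ladder rungs, and the verification reduces to the foam identities of \cite[Lemma 3.7]{QR} together with a careful bookkeeping of the signs contributed by sliding dots through 2-labeled facets. The only place where our conventions differ from those in \cite{QR} is the specific scalar choice $t_{i,i+1}=-1$, $t_{i+1,i}=+1$, $t_{i,j}=+1$ for $|i-j|\geq 2$; I would check that these signs are exactly absorbed by the orientation conventions on singular seams in the foamic images of the $\cal{E}_i\cal{E}_j$ and $\cal{F}_i\cal{F}_j$ crossings, and thereby that the relations hold over $\mathbb{Z}$ (not merely over a field, as in the published statement).

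The main obstacle will be the sign-tracking in the mixed $\cal{E}\cal{F}$-relations and in the relations involving two distinct indices $i,j$ with $|i-j|=1$: these are where the choice of $t_{i,j}$ enters non-trivially, and where any sign mismatch between the categorified quantum group and the foam 2-category would first appear. Once those signs are reconciled, the remaining checks follow \cite[Theorem 3.9]{QR} essentially verbatim, with the only modification being the passage from $\cal{U}_Q(\mathfrak{sl}_n)$ to $\cal{U}_Q(\mathfrak{gl}_n)$, which amounts to remembering the individual entries $k_i$ of $\vec k$ rather than only their differences $k_i-k_{i+1}$; this extra data is precisely what the $\mathfrak{gl}_n$-labels on the ladder rungs record, so compatibility is automatic.
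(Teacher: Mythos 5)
Your proposal sketches a from-scratch verification of all defining relations, which in effect re-derives the content of \cite[Theorem~3.9]{QR} — and indeed you acknowledge near the end that the checks ``follow \cite[Theorem~3.9]{QR} essentially verbatim.'' The paper instead treats that theorem as a black box: for each fixed $N$, QR~Theorem~3.9 already supplies a 2-functor $\check{\cal{U}}_Q(\mathfrak{sl}_n) \to 2\mathrm{Foam}_n(N)$, so the paper's proof simply restricts from $\check{\cal{U}}_Q(\mathfrak{sl}_n)$ (the version with divided-power 1-morphisms) to the full sub-2-category $\cal{U}_Q(\mathfrak{sl}_n)$, sums over all $N$, and identifies the resulting direct sum of weight-$N$ blocks of $\cal{U}_Q(\mathfrak{sl}_n)$ with $\cal{U}_Q(\mathfrak{gl}_n)$. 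Your extended relation-checking is therefore correct in outline but redundant given the citation; you also omit the structural detail that QR's domain is $\check{\cal{U}}_Q(\mathfrak{sl}_n)$ rather than $\cal{U}_Q(\mathfrak{sl}_n)$, so the restriction step is worth noting. Your concern about integrality of the $t_{i,j}$ is legitimate but the paper handles it once and for all in the definition of $\cal{U}_Q(\mathfrak{gl}_n)$ (where it observes that the chosen values of $t_{i,j}$ make the QR presentation valid over $\mathbb{Z}$), rather than in the proof of this proposition. The tradeoff is one of economy: the paper's citation-based proof is shorter and less error-prone, while your approach would make the sign bookkeeping explicit and self-contained — useful if one doubted the correctness of QR over $\mathbb{Z}$, but otherwise unnecessary.
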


\begin{proof}
    Theorem 3.9 in \cite{QR} as stated gives, for any fixed $N \in \mathbb{Z}$, a 2-functor from $\check{\cal{U}}_Q(\mathfrak{sl}_n)$ to $2\mathrm{Foam}_n(N)$, where $\check{\cal{U}}_Q(\mathfrak{sl}_n)$ contains $\cal{U}_Q(\mathfrak{sl}_n)$ as a full sub-2-category and also has 1-morphisms for divided powers of $\cal{E}_i$ and $\cal{F}_i$ (we will not need these here). Restricting from $\check{\cal{U}}_Q(\mathfrak{sl}_n)$ to $\cal{U}_Q(\mathfrak{sl}_n)$, summing over all $N$, and identifying the summed domain with $\cal{U}_Q(\mathfrak{gl}_n)$, we get the 2-functor in the statement of the proposition.
\end{proof}

Since $\bigoplus_N 2\mathrm{Foam}_n(N)$ is a sub-2-category of $\mathfrak{F}$, we can view the above proposition as giving a 2-functor from $\cal{U}_Q(\mathfrak{gl}_n)$ to $\mathfrak{F}$. 

\section{A signed Burnside lift of Blanchet--Khovanov algebras}\label{sec:SignedBurnsideLift}

\begin{definition}\label{def:ShapeMulticatForWk}
    Let $\Sc^0_{\vec{k}}$ be the shape multicategory of $W_{\vec{k}}$, and let $\Sc_{\vec{k}}$ be the canonical groupoid enrichment of $\Sc^0_{\vec{k}}$.
\end{definition}

Recall $W_{\vec k}$ from \Cref{def:rlFg}. Given $u,v \in W_{\vec{k}}$, let $v^*$ denote $v \subset [0,1] \times (0,1)$ reflected in the first coordinate and with orientations reversed. Then let $uv^*$ be the result of gluing $u \subset [0,1/2] \times (0,1)$ and $v^* \subset [1/2,1] \times (0,1)$; we can view $uv^*$ as a $\mathfrak{gl}_2$-web from $2\omega_{n,m}$ to itself, as on the left of \Cref{fig:ClosingWeb}.

For any $\mathfrak{gl}_2$-web $w$ from $2 \omega_{n,m}$ to itself, connect the 2-labeled edges of $w$ at $\{1\} \times (0,1)$ to the 2-labeled edges at $\{0\} \times (0,1)$ by circling around to the left as in Figure~\ref{fig:ClosingWeb}; the result is a closed web $\overline{w}$. 

\begin{figure}
    \centering
    \includegraphics[scale=0.7]{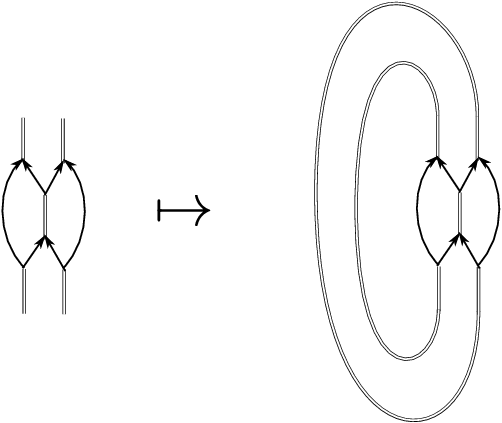}
    \caption{Closing a $\mathfrak{gl}_2$-web $w$ to the left; we are drawing the first coordinate bottom to top and the second coordinate left to right.}
    \label{fig:ClosingWeb}
\end{figure}

Now, given $u$ and $v$, choose any $\mathfrak{gl}_2$-foam $F$ from $\varnothing$ to $\overline{uv^*}$ whose underlying topological foam $\hat{F}$ consists of $k$ disks bounding the $k$ circles of the topological web underlying $\overline{uv^*}$. Each disk $D_i$ of $\hat{F}$ is divided into some number of facets by the 2-labeled facets of $F$; for each $i$, choose a preferred facet of $D_i$ in $F$. Then the $2^k$ ways of choosing dot or no-dot on the preferred facet of each $D_i$ give us a basis for the abelian group of $\mathfrak{gl}_2$-foams bounding $\overline{uv^*}$.

\begin{definition} 
    The set of $2^k$ possible labelings on the chosen foam $F$ is called $\mathrm{Foam}_{\mathrm{chosen}}(u,v)$.
\end{definition}

\begin{remark}\label{rem:ClosedVs2Closed}
    By applying homeomorphisms of the cube, we can view $\mathfrak{gl}_2$-foams from $u$ to $v$ as $\mathfrak{gl}_2$-foams bounding $\overline{uv^*}$ and vice-versa. See Figure~\ref{fig:ClosingFoam}, which shows how to go from a foam $u \to v$ to a foam $\varnothing \to \overline{uv^*}$ in two steps. The first step unfolds the left and top faces of the cube, and the second step unfolds the front and back faces in the resulting cube. Both steps are homeomorphisms from the cube to itself and are thus reversible. Note that the orientations on the back face of the cube in a foam from $u$ to $v$ get reversed when viewing the foam as going from $\varnothing$ to $uv^*$; the orientations on all other faces of the cube are preserved.
    
    In particular, we can view $\mathrm{Foam}_{\mathrm{chosen}}(u,v)$ as a set of foams from $u$ to $v$. Forgetting 2-labeled facets gives a bijection from $\mathrm{Foam}_{\mathrm{chosen}}(u,v)$ to $\mathrm{Disks}(\hat{u},\hat{v})$.
\end{remark}

\begin{figure}
    \centering
    \includegraphics[width=\textwidth]{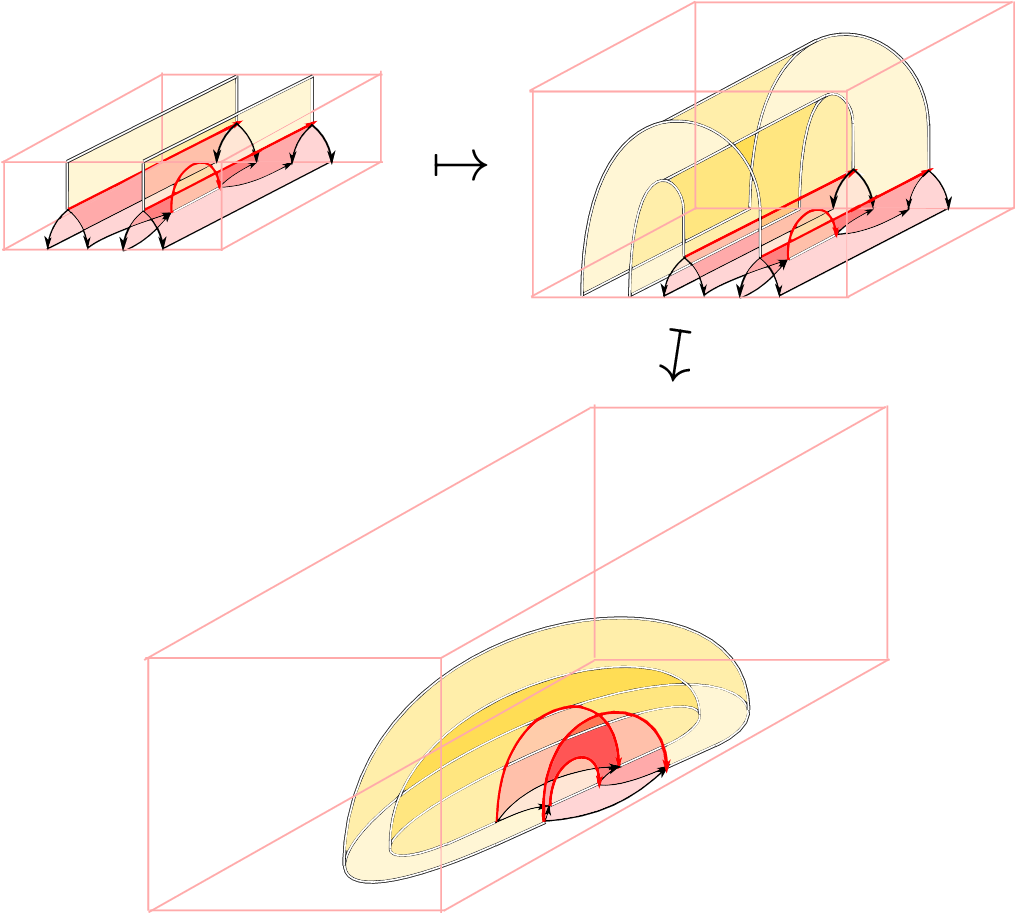}
    \caption{First step: unfolding the top and left faces. The top, left, and bottom faces of the first cube become the bottom face of the second cube. Second step: unfolding the front and back faces; the front, bottom, and back faces of the second cube become the bottom face of the third cube. Orientations get reversed on the back face in the second step.}
    \label{fig:ClosingFoam}
\end{figure}

\begin{definition}\label{def:MultifunctorGl2Case}
    Define a multifunctor 
    \[
        \Phi_{\vec{k}} \colon \Sc_{\vec{k}} \to \underline{\mathscr{B}}_{\sigma}
    \]
as follows.
\begin{itemize}
    \item An object $(u,v)$ of $\Sc_{\vec{k}}$ gets sent to  the finite set  $\mathrm{Foam}_{\mathrm{chosen}}(u,v)$.

    \item A 1-multimorphism of $\Sc_{\vec{k}}$ from $(u_0,u_1),\ldots,(u_{m-1},u_m)$ to $(u_0,u_m)$, is a tree $T$ as in \Cref{def:MultifunctorSl2Case}. We want to send $T$ to a signed correspondence; note that  \Cref{def:MultifunctorSl2Case} gives us an unsigned correspondence $\hat{A}$ from 
    \[
        \mathrm{Disks}(\hat{u}_0,\hat{u}_1) \times \cdots \times \mathrm{Disks}(\hat{u}_{m-1},\hat{u}_m)
    \]
    to $\mathrm{Disks}(\hat{u}_0, \hat{u}_m)$, which we can view as an unsigned correspondence from 
    \[
        \mathrm{Foam}_{\mathrm{chosen}}(u_0,u_1) \times \cdots \times \mathrm{Foam}_{\mathrm{chosen}}(u_{m-1},u_m)
    \]
    to $\mathrm{Foam}_{\mathrm{chosen}}(u_0,u_m)$. 
    
    We upgrade $\hat{A}$ to a signed correspondence $A$ by, for each nonempty matrix entry of $\hat{A}$ (say of size $2^k)$, looking at the corresponding entry of the algebraic matrix $M$ for the $m$-fold multiplication in $\mathfrak{W}^{\circ}_{\vec{k}}$ with respect to our chosen bases $\mathrm{Foam}_{\mathrm{chosen}}(u_{i-1},u_i)$. This multiplication concatenates the cubes the basis elements live in; we can alternatively view it in terms of a foamy variant of the frames of   \Cref{def:Frames}. An example $F_{u_0,\ldots,u_m}$ of such a frame is shown on the left of \Cref{fig:FoamyFrame}. The notation $F_{u_0,\ldots,u_m}$ is slightly inaccurate; the orientations on the singular seams of $F_{u_0,\ldots,u_m}$ also depend on the orientations of singular seams in the $m$ basis elements we are multiplying together.
    
    As opposed to the frames of \Cref{def:Frames}, the frame $F_{u_0,\ldots,u_m}$ is embedded in a ``sheltered'' brick castle where additional slabs have been added to the back side and the top of the castle (see the right of Figure~\ref{fig:FoamyFrame}). Multiplying basis elements by concatenation is equivalent to plugging them into the slots of the corresponding frame, like rods in a nuclear reactor.

    \begin{figure}
        \centering
        \includegraphics[width=\textwidth]{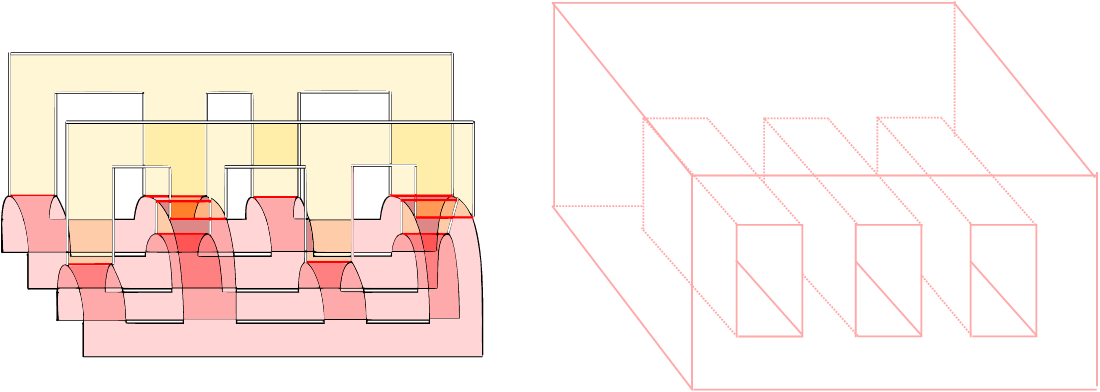}
        \caption{Left: a foamy frame; orientations on the singular seams are determined by the basis elements being plugged into the $m$ ``slots'' of the frame. Right: the sheltered brick castle in which the foamy frame is embedded.}
        \label{fig:FoamyFrame}
    \end{figure}
    
    Equivalently, we can perform this multiplication in the following series of steps:
    \begin{enumerate}
        \item Take the elements of $\mathrm{Foam}_{\mathrm{chosen}}(u_{i-1},u_i)$ and ``tilt them over to the left'' as in the first step of Figure~\ref{fig:ClosingFoam}, by unfolding the left side of the cube in which the foams are embedded. At this point we could perform the multiplication by gluing the foams into an unsheltered frame like the one on the left of Figure~\ref{fig:BasicFrame}; the frame will have some 2-labeled facets. Alternatively, we could continue and take the multimerge cobordism perspective.
        \item As in the second step of Figure~\ref{fig:ClosingFoam}, unfold the front and back of the cube to turn the elements of $\mathrm{Foam}_{\mathrm{chosen}}(u_{i-1},u_i)$ into $\mathfrak{gl}_2$-foams bounding closed webs. Take the disjoint union of the resulting foams by gluing their embedding cubes front-to-back in order.
        \item\label{it:ZipUnzipSaddle} View the foamy frame $F_{u_0,\ldots,u_m}$ as a multimerge foam as in Figure~\ref{fig:TransformingFoamyFrame}; as such, it can be written as a composition of zip, unzip, and saddle foams (e.g.\ the saddle in (21) in \cite{EST1} is the composition of two unzips and a zip). Apply the linear maps from these zip, unzip, and saddle foams, in order, to our disjoint-union foam. 
        \item The result is an element of the abelian group of $\mathfrak{gl}_2$-foams from the empty set to $u_0 u_m^*$, which we can express in our basis $\mathrm{Foam}_{\mathrm{chosen}}(u_0,u_m)$. 
    \end{enumerate}
    We can view the result of the first two steps as a set $\beta_{\mathrm{chosen}}$ of foams from the empty set to $\overline{u_0 u_1^*} \sqcup \cdots \sqcup \overline{u_{m-1}u_m^*}$ whose underlying topological foams are representatives for the usual $2^k$ isotopy classes of dotted disks bounding the underlying topological web of $\overline{u_0 u_1^*} \sqcup \cdots \sqcup \overline{u_{m-1}u_m^*}$. We can also choose an admissible flow (in the sense of \cite[Definition 3.4]{KW}) on $\overline{u_0 u_1^*} \sqcup \cdots \sqcup \overline{u_{m-1}u_m^*}$ and get another such set $\beta_{\mathrm{KW}}$ of $2^k$ foams, the (flow-dependent) Krushkal--Wedrich basis for the abelian group $\mathcal{F}$ of $\mathfrak{gl}_2$-foams bounding $\overline{u_0 u_1^*} \sqcup \cdots \sqcup \overline{u_{m-1}u_m^*}$ \cite[Definition~3.7]{KW}. 
    
    Both $\beta_{\mathrm{chosen}}$ and $\beta_{\mathrm{KW}}$ are such that forgetting 2-labeled facets produces the usual dotted-disks basis for the abelian group of dotted cobordisms bounding the underlying topological web of $\overline{u_0 u_1^*} \sqcup \cdots \sqcup \overline{u_{m-1}u_m^*}$. Thus, by \cite[Proposition 3.12]{KW}, there is a canonical bijection between $\beta_{\mathrm{chosen}}$ and $\beta_{\mathrm{KW}}$, and each element of $\beta_{\mathrm{chosen}}$ is plus or minus one times the corresponding element of $\beta_{\mathrm{KW}}$. It follows that the change-of-basis-of-$\mathcal{F}$ matrix $M_{\mathrm{CoB}}$ from $\beta_{\mathrm{chosen}}$ to $\beta_{\mathrm{KW}}$ has each column given by plus or minus a standard basis vector (zeroes in all entries but one, and one in the remaining entry). In particular, each column of $M_{\mathrm{CoB}}$ has its entries all nonnegative or all nonpositive.

    To get the full algebraic matrix $M$, one can take $M = M'' M' M_{\mathrm{CoB}}$ where $M'$ and $M''$ are defined as follows.
    Each zip, unzip, and saddle foam from item \eqref{it:ZipUnzipSaddle} above induces an admissible flow on its target, which has its own Krushkal--Wedrich basis. By \cite[Proposition 3.13]{KW}, the matrices for these zip, unzip, and saddle foams, in the Krushkal--Wedrich bases, have all entries of the entire matrix either all nonnegative or all nonpositive. We can take $M'$ to be the product of these matrices; the entries of $M'$ are all nonnegative or all nonpositive. Finally, we can take $M''$ to be the change-of-basis matrix from the Krushkal--Wedrich basis of the abelian group of $\mathfrak{gl}(2)$-foams bounding $\overline{u_0 u_m^*}$ to our chosen basis of this group of bounding foams; by \cite[Proposition 3.13]{KW}, each row of $M''$ has its entries either all nonnegative or all nonpositive.

    It follows that in producing $M$ by matrix multiplication as above, we never get any cancellation in the dot products used for matrix multiplication. Equivalently, if $M^+$ is the analogue of $M$ for the usual Khovanov algebras $H^p$ (where all entries of $M^+$ are nonnegative), then each entry of $M$ equals the corresponding entry of $M^+$ times some sign $\sigma \in \{\pm 1\}$. We define all of the elements of the entry of the signed correspondence $A$ associated to $T$ to have sign $\sigma$. 

    \item We send a change-of-tree 2-morphism $T \to T'$ in $\mathcal{S}_{\vec{k}}$ to the entrywise bijection of signed correspondences $A_T \to A_{T'}$ specified by the entrywise bijection $\phi_{T \to T'}$ of unsigned correspondences; we must check that $\phi_{T \to T'}$ sends positive entries of $A_T$ to positive entries of $A_{T'}$ and negative entries of $A_T$ to negative entries of $A_{T'}$. Indeed, all entries of $A_T$ have the same sign (the sign of all entries of $M_T$) and all entries of $A_{T'}$ have the same sign (the sign of all entries of $M_{T'}$). The two signs agree because $M_T$ and $M_{T'}$ are the same algebraic matrix for $m$-fold multiplication in $\mathfrak{W}^{\circ}_{\vec{k}}$ in our chosen bases, which is independent of the choice of tree because $\mathfrak{W}^{\circ}_{\vec{k}}$ is associative.

\end{itemize}
\end{definition}

\begin{figure}
    \centering
    \includegraphics[width=\textwidth]{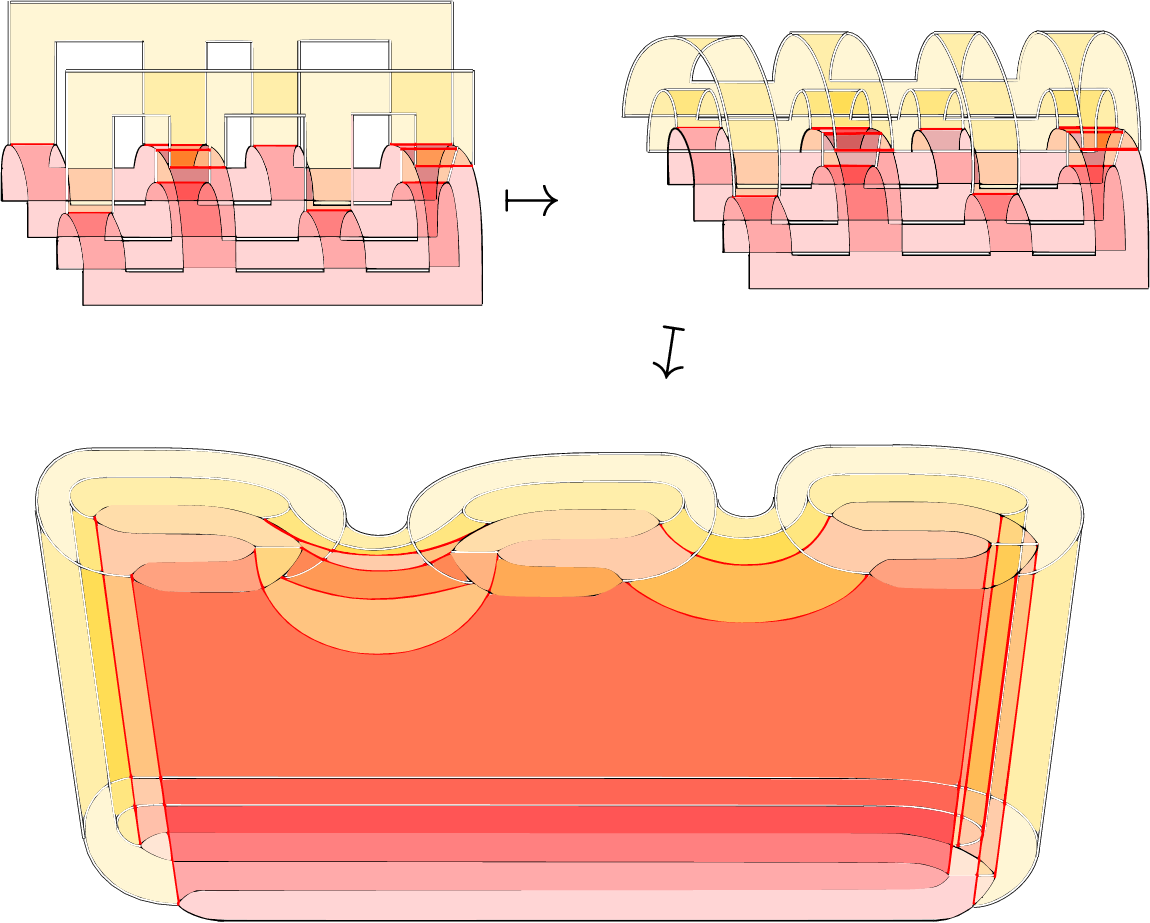}
    \caption{Transforming a foamy frame into a multimerge foam.}
    \label{fig:TransformingFoamyFrame}
\end{figure}

The fact that $\Phi_{\vec{k}}$ respects vertical composition and horizontal multicomposition of 2-morphisms follows from Theorem~\ref{thm:Sl2MultifunctorValid}, since Definition~\ref{def:MultifunctorGl2Case} agrees with Definition~\ref{def:MultifunctorSl2Case} on 2-morphisms. The fact that $\Phi_{\vec{k}}$ respects multicomposition of 1-multimorphisms follows from Theorem~\ref{thm:Sl2MultifunctorValid} and from how we upgrade unsigned correspondences in Definition~\ref{def:MultifunctorSl2Case} to signed correspondences in Definition~\ref{def:BKAlg} by getting the signs from algebraic matrices.

By construction, when we pass from $\underline{\mathscr{B}}_{\sigma}$ to $\underline{\mathsf{Ab}}$ by linearizing, the correspondences $A$ associated to $1$-multimorphisms in $\mathcal{S}_{\vec{k}}$ become the matrices $M$ for repeated multiplication in $\mathfrak{W}^{\circ}_{\vec{k}}$. Thus, when we linearize $\Phi_{\vec{k}} \colon \mathcal{S}_{\vec{k}} \to \underline{\mathscr{B}}_{\sigma}$, we obtain $\mathfrak{W}^{\circ}_{\vec{k}}$ viewed as a multifunctor from $\mathcal{S}_{\vec{k}}$ to $\underline{\mathsf{Ab}}$.

\begin{remark}\label{rem:WebBimodules}
    We could adapt Definition~\ref{def:MultifunctorGl2Case} to the bimodules associated to $\mathfrak{gl}_2$-webs, using foamy ``web frames'' analogous to the tangle frame shown in Figure~\ref{fig:GeneralFrame} (except that for $\mathfrak{gl}_2$-webs, which are flat, the frames will be more like the flat tangle frames of Definition~\ref{def:FlatTangleFrame} since they will not have saddles in their middle section). For brevity, we will omit the details.
\end{remark}

\begin{remark}\label{rem:GeneralMorSpaces}
    We have written everything above for webs from $2\omega_{n,m}$ to some $\vec{k} \in \bbl$. However, nothing would change if we replace $(2 \omega_{n,m}, \vec{k})$ with $(\vec{k}_1, \vec{k}_2)$ for two elements $\vec{k}_1, \vec{k}_2 \in \bbl$ with the same length $n$ and sum $N$ (now $N$ is not necessarily even). 
    
    In particular, given two objects $\vec{k}_1, \vec{k}_2$ of $\mathfrak{F}$, we get a signed Burnside version of the morphism category from $\vec{k}_1$ to $\vec{k}_2$ in $\mathfrak{F}$.

    More generally, we could allow finite sequences $\vec{k}_i$ of elements of $\{-2,-1,0,1,2\}$, possibly with different lengths and sums, and do everything for (not necessarily upward-directed) $\mathfrak{gl}_2$-webs from $\vec{k}_1$ to $\vec{k}_2$. We thereby get signed Burnside versions of the morphism categories in the 2-category $\overline{\mathfrak{F}}$ from \cite[Definition 2.17]{EST1}.
\end{remark}

\section{Spectral 2-representations}\label{sec:UpToSign2Action}

In this section, we show that Lawson--Lipshitz--Sarkar's spectrificiation of Khovanov's arc algebra can be interpreted as spectrifying a 2-representation of the categorified quantum group $\cal{U}_Q(\mathfrak{gl}_n)$.    Recall that  a \new{2-representation} of $\cal{U}_Q(\mathfrak{gl}_n)$ is a graded $k$-linear 2-functor $\cal{U}_Q(\mathfrak{gl}_n)\to \cal{K}$ for some graded additive 2-category $\cal{K}$.  It is most common to take $\Bbbk=\Z$ or a field; here we will take $k = \mathbb{F}_2$.

As explained in the introduction, Khovanov's arc algebras $H^m$ naturally give rise to a categorification of the invariant space ${\rm Inv}(V^{\otimes 2m})$ of tensor powers of the defining representation of $U_q(\mf{sl}_2)$, and these invariant spaces naturally organize into a representation of $U_q(\mathfrak{gl}_n)$. Correspondingly, the arc algebras $H^m$ organize into a 2-representation of $\cal{U}_Q(\mathfrak{gl}_n)$. Here we leverage Lawson--Lipshitz--Sarkar's study of the homotopy functoriality of spectral arc algebras to define what we call a spectral 2-representation lifting this algebraic 2-representation.

The target of our spectral 2-representations will be a bicategory of spectral categories, spectral bimodules, and spectral bimodule homomorphisms. To match with the constructions in \cite{LLS-homotopy}, we will work with a strictified version of this bicategory in which composition of 1-morphisms is strictly associative.\footnote{This strictification allows Lawson--Lipshitz--Sarkar to use the theory of enriched multicategories, with strictly associative multicomposition, when defining tangle multimodules over spectral arc algebras. For our purposes, consideration of bimodules, rather than multimodules, suffices.}

\subsection{A 2-category of spectral bimodules}

Recall that Lawson--Lipshitz--Sarkar \cite[Definition 4.20]{LLS-func} denote by $\mathrm{SBim}$ the multicategory enriched in spectral categories whose objects are spectral categories and whose multimorphism categories are (spectral) derived categories of spectral multimodules. We will only look at the 1-multimorphism categories, where objects are spectral bimodules over spectral categories $(\mathscr{A},\mathscr{B})$. Furthermore, we will work with an additive version of this 1-multimorphism category, namely the homotopy category of the model category of $(\mathscr{A},\mathscr{B})$-bimodules (see \cite[Proposition 2.4]{BlumbergMandell}).

\begin{definition}
    Let $\mathrm{SpecBim}$ denote the $\mathbb{Z}$-linear 2-category obtained by modifying Lawson--Lipshitz--Sarkar's $\mathrm{SBim}$ as above.
\end{definition}

Concretely, objects of $\mathrm{SpecBim}$ are finite graded spectral categories as in \cite[Definition 4.20]{LLS-func} and 1-morphisms from an object $\mathscr{A}$ to an object $\mathscr{B}$ are sequences of bimodules 
\[
    ({_{\mathscr{A}_1}}(\mathscr{M}_1)_{\mathscr{A}_2}, \ldots, {_{\mathscr{A}_{k-1}}}(\mathscr{M}_{k-1})_{\mathscr{A}_k})
\]
with $\mathscr{A}_1 = \mathscr{A}$ and $\mathscr{A}_k = \mathscr{B}$. 

For 2-morphisms from a sequence $(\mathscr{M}_1,\ldots,\mathscr{M}_{k-1})$ to a sequence $(\mathscr{M}'_1,\ldots,\mathscr{M}'_{k-1})$, let $\mathscr{M}$ be the tensor product of cofibrant replacements of the $\mathscr{M}_i$, and similarly for $\mathscr{M}'$ (using a functorial choice of cofibrant replacements). A 2-morphism from the first sequence to the second is a morphism from $\mathscr{M}$ to $\mathscr{M'}$, up to sign, in the homotopy category of the model category of spectral bimodules.

\subsection{A spectral 2-representation}

Let $\mathrm{SpecBim}^{\mathbb{F}_2}$ denote the $\mathbb{F}_2$-linear 2-category obtained from $\mathrm{SpecBim}$ by tensoring each abelian group of 2-morphisms with $\mathbb{F}_2$.  A \new{spectral 2-representation} is a 2-representation with target $\cal{K}=\mathrm{SpecBim}$ or $\cal{K}=\mathrm{SpecBim}^{\mathbb{F}_2}$. In particular, a spectral 2-representation of $\cal{U}_Q(\mathfrak{gl}_n)$ maps weights $\vec{k}$ of $\mf{gl}_n$ to spectral categories, 1-morphisms $\cal{E}_i\mathrm{1}_{\vec{k}}$ and $\cal{F}_j\mathrm{1}_{\vec{k}}$ to spectral bimodules, and generating 2-morphisms (dots, crossings, caps, and cups) to morphisms in the homotopy category of the model category of spectral bimodules.  

\begin{theorem} \label{thm:2rep}
Let $\Bbbk=\mathbb{F}_2$.  Spectral arc algebras give rise to a spectral 2-representation  \[
\cal{U}_Q(\mathfrak{gl}_n)^{\mathbb{F}_2} \to \mathrm{SpecBim}^{\mathbb{F}_2}
\]
as follows.
\begin{itemize}
        \item An object $\vec{k}$ of $\cal{U}_Q(\mathfrak{gl}_n)^{\mathbb{F}_2}$ with all entries in $\{0,1,2\}$ and with an even number of ones (say $2m$) gets sent to Lawson--Lipshitz--Sarkar's spectral category $\mathscr{H}^m$. If $\vec{k}$ does not have an even number of ones, or if it has any entries outside $\{0,1,2\}$, it gets sent to the zero spectral category.

        \item A 1-morphism $\mathcal{E}_i \mathbf{1}_{\vec{k}}$ or $\mathcal{F}_i \mathbf{1}_{\vec{k}}$ of $\cal{U}_Q(\mathfrak{gl}_n)^{\mathbb{F}_2}$, whose source and target objects have entries only in $\{0,1,2\}$, gets sent to Lawson--Lipshitz--Sarkar's spectral bimodule $\mathscr{X}(T)$ for the underlying topological web of the ladder web associated to the 1-morphism, viewed as a flat tangle $T$. If the source or target object of the 1-morphism has any entries outside $\{0,1,2\}$, then the 1-morphism gets sent to the corresponding zero bimodule. General 1-morphisms of $\cal{U}_Q(\mathfrak{gl}_n)^{\mathbb{F}_2}$ get sent to sequences of these bimodules.

        \item A generating 2-morphism $\alpha$ (locally a dot, crossing, cup, or cap) of $\cal{U}_Q(\mathfrak{gl}_n)^{\mathbb{F}_2}$ gets sent to a morphism between sequences of spectral bimodules. If any of the bimodules in either sequence is the zero bimodule, then $\alpha$ gets sent to the zero morphism in $\mathrm{SpecBim}^{\mathbb{F}_2}$. Otherwise, $\alpha$ is sent to a composition of 2-morphisms in $\mathrm{SpecBim}^{\mathbb{F}_2}$ that we now describe:

        \begin{itemize}
            \item From the tensor product of cofibrant replacements for the first sequence (each coming from some basic ladder web), map via the (iterated) gluing map from \cite[equation (4.6)]{LLS-func} to the the cofibrant replacement of the spectral bimodule associated to the composite of the ladder webs. More precisely, one should use the derived version of the gluing map, shown to be an equivalence of spectral bimodules in \cite[Theorem 5]{LLS-KST}.
            
        \item Apply the cofibrant replacement of the up-to-sign 2-morphism $\mathscr{X}(\Sigma)$ in $\mathrm{SpecBim}$ associated by Lawson--Lipshitz--Sarkar \cite[item (4) of Definition 4.24 and paragraph above Theorem 6]{LLS-func}) to the underlying topological foam of the image of $\alpha$ under the foamation 2-functor $\Phi_2$ of Proposition~\ref{prop:FoamationFunctor}, viewed as a cobordism $\Sigma$ between (flat) tangles. 

        \item Map from the cofibrant replacement of the bimodule for the output-side glued ladder web to the tensor product of cofibrant replacements for the second sequence via the inverse of the (iterated) gluing map from \cite[equation (4.6)]{LLS-func}, interpreted in the derived sense as above. Note that this inverse exists as a 2-morphism in $\mathrm{SpecBim}^{\mathbb{F}_2}$ because, by \cite[Theorem 5]{LLS-KST} (see also \cite[Lemma 4.23]{LLS-func}), the gluing map is an equivalence of spectral bimodules and thus gets inverted in the homotopy category of the model category of spectral bimodules.
        \end{itemize}

    \noindent Correspondingly, a general 2-morphism of $\cal{U}_Q(\mathfrak{gl}_n)^{\mathbb{F}_2}$ gets sent to a sum of compositions of 2-morphisms in $\mathrm{SpecBim}^{\mathbb{F}_2}$ associated to its basic pieces. 
    \end{itemize}
\end{theorem}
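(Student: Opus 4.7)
The plan is to verify the 2-functor axioms for the prescribed assignments: well-definedness on generators, compatibility with vertical and horizontal composition of 2-morphisms, and satisfaction of the defining relations of $\cal{U}_Q(\mathfrak{gl}_n)^{\mathbb{F}_2}$. On objects and 1-morphisms the work is minimal: a generating 1-morphism $\cal{E}_i\mathbf{1}_{\vec{k}}$ or $\cal{F}_i\mathbf{1}_{\vec{k}}$ is sent via the foamation 2-functor $\Phi_2$ of \Cref{prop:FoamationFunctor} to a ladder web whose underlying topological flat tangle $T$ names the LLS spectral bimodule $\mathscr{X}(T)$. For a generating 2-morphism $\alpha$, the prescribed formula sandwiches the LLS up-to-sign map $\mathscr{X}(\Sigma)$, where $\Sigma$ is the underlying topological foam of $\Phi_2(\alpha)$, between a gluing map and an inverse gluing map. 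The gluing maps are weak equivalences of spectral bimodules by \cite[Theorem 5]{LLS-KST} and \cite[Lemma 4.23]{LLS-func}, so they become invertible in the homotopy category that defines $\mathrm{SpecBim}$; the sign ambiguity in $\mathscr{X}(\Sigma)$ from \cite[Definition 4.24]{LLS-func} is killed by passing to $\mathrm{SpecBim}^{\mathbb{F}_2}$.

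Next I would verify compatibility with vertical and horizontal composition of 2-morphisms. Under $\Phi_2$, vertical composition in $\cal{U}_Q(\mathfrak{gl}_n)$ corresponds to stacking of $\mathfrak{gl}_2$-foams; forgetting thickness-two facets turns this into stacking of dotted cobordisms. Lawson--Lipshitz--Sarkar's up-to-sign functoriality for cobordism concatenation \cite[Theorem 6]{LLS-func} then gives the corresponding compatibility for $\mathscr{X}(\Sigma)$, and the inserted inverse-gluing and gluing maps cancel between adjacent factors in a vertical composite. Horizontal composition is parallel, using that the gluing maps are natural for composition of ladder webs at the spectral bimodule level. In both cases, residual sign ambiguities are killed by $\mathbb{F}_2$-linearization.

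The main obstacle is verifying the defining relations of $\cal{U}_Q(\mathfrak{gl}_n)$: the nilHecke relations, bubble evaluations, the $\mathfrak{sl}_2$ commutator relation, and the biadjointness (zig-zag) relations. Each such relation, after applying $\Phi_2$, holds as a genuine equality of 2-morphisms in $\mathfrak{F}$ by \Cref{prop:FoamationFunctor}; after forgetting thickness-two facets, it becomes an equality of dotted cobordisms that holds only up to sign, with the discrepancy localized to places where a dot must be pushed through a 2-labeled facet (the prototypical example being \Cref{fig:nil2neck} versus \Cref{fig:nil2neck-blanchet}). The verification proceeds by case analysis: for each relation I would identify the corresponding up-to-sign identity among the underlying topological cobordisms, invoke \cite[Theorem 6]{LLS-func} to translate it into an up-to-sign equality of spectral bimodule maps, and then use $\mathbb{F}_2$-linearization to conclude strict equality in $\mathrm{SpecBim}^{\mathbb{F}_2}$. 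The technical delicacy is confirming that each sign discrepancy is a single global sign multiplying an entire term in the relation (so that it vanishes after tensoring with $\mathbb{F}_2$) rather than a mixing of distinct terms with opposite signs; this will follow from the fact that LLS's sign ambiguity in $\mathscr{X}(\Sigma)$ is an overall sign attached to each connected generating cobordism, so the foam-to-cobordism sign discrepancies propagate coherently to the spectral level and are uniformly trivialized over $\mathbb{F}_2$.
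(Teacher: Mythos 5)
Your overall plan matches the paper's: reduce to single bimodules for glued ladder webs, push each generating $2$-morphism through the foamation functor, forget thickness-two facets, and check the defining relations of $\cal{U}_Q(\mathfrak{gl}_n)^{\mathbb{F}_2}$ via the resulting identities among underlying topological cobordisms. The organization you give (separate checks of vertical/horizontal composition compatibility, then relations) is a sound alternative to the paper's ``glue $\circ$ unglue $= \mathrm{id}$'' reduction at the beginning of its proof.

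However, there is a genuine gap in the step where you ``invoke \cite[Theorem 6]{LLS-func} to translate [the up-to-sign cobordism identity] into an up-to-sign equality of spectral bimodule maps.'' Theorem 6 of \cite{LLS-func} only provides isotopy invariance: it says $\mathscr{X}(\Sigma)$ depends on $\Sigma$ only through its isotopy class, up to sign. But the identities you need among dotted cobordisms do not hold up to isotopy --- they hold only modulo the Bar--Natan local relations (neck cutting, sphere evaluation, killing two dots). Theorem 6 says nothing about whether the assignment $\Sigma \mapsto \mathscr{X}(\Sigma)$ annihilates formal $\mathbb{F}_2$-linear combinations that form Bar--Natan relations. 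That is the crux, and it requires separate inputs: neck cutting for $\mathscr{X}(\Sigma)$ over $\mathbb{F}_2$ is \cite[Proposition 7.2]{LLS-func}, and the sphere evaluations are \cite[Lemma 7.3]{LLS-func} (used together with the locality property (PMF-4) of \cite[Proposition 4.25]{LLS-func} to extract the sphere component). The two-dots-kill relation is then deduced from the other two. Without these specific facts the nilHecke relation, the $\mathfrak{sl}_2$ commutator relation, and the bubble evaluations --- i.e.\ all the relations that genuinely use Bar--Natan relations rather than mere isotopy --- remain unverified. Your ``technical delicacy'' paragraph is discussing a different (and less central) concern about how signs distribute; the paper's argument avoids that concern entirely by proving the Bar--Natan relations directly over $\mathbb{F}_2$ at the spectral level, which you should do as well.
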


\begin{proof}
    We first claim that it suffices to show that for each defining relation in $\cal{U}_Q(\mathfrak{gl}_n)$, say an equality of 2-morphisms from some 1-morphism $X$ to another 1-morphism $Y$ (each of $\{X,Y\}$ being a composite of some $\cal{E}_i$ and $\cal{F}_j$), the relation holds between 2-morphisms in $\mathrm{SpecBim}^{\mathbb{F}_2}$ from the (single-term sequence) spectral bimodule associated to the composite ladder web for $X$ to the (single-term sequence) spectral bimodule associated to the composite ladder web for $Y$.

    Indeed, what we really want to show is that the relation holds between 2-morphisms in $\mathrm{SpecBim}^{\mathbb{F}_2}$ from the (multi-term) sequence of spectral bimodules associated to the factors of $X$ to the (multi-term) sequence of spectral bimodules associated to the factors of $Y$. To prove the claim, note that the equality we are aiming for is in general a sum of compositions of generating 2-morphisms (locally dots, crossings, cups, and caps, extended horizontally by the identity). Consider one term in this sum, and suppose it's a composition $\alpha_k \circ \cdots \circ \alpha_1$ of generating 2-morphisms.

    By definition, $\alpha_k \circ \cdots \circ \alpha_1$ gets sent to a 2-morphism that schematically has the form
    \[
    (\textrm{unglue}  \circ (\textrm{map from } \alpha_k) \circ \textrm{glue} ) \circ \cdots \circ (\textrm{unglue}  \circ (\textrm{map from } \alpha_1) \circ \textrm{glue} ).
    \]
    The intermediate compositions ``glue $\circ$ unglue'' are identity 2-morphisms in $\mathrm{SpecBim}^{\mathbb{F}_2}$, so we can rewrite the above composition as
    \[
    \textrm{unglue}  \circ (\textrm{map from } \alpha_k) \circ \cdots \circ (\textrm{map from } \alpha_1) \circ \textrm{glue}.
    \]
    We will show below that if we sum the portion ``$(\textrm{map from } \alpha_k) \circ \cdots \circ (\textrm{map from } \alpha_1)$'' of the above expression over all terms in the sum defining the $\cal{U}_Q(\mathfrak{gl}_n)^{\mathbb{F}_2}$ relation in question, we get zero (mod 2). Given this, the same is true when we precompose all terms with ``glue'' and postcompose all terms with ``unglue,'' proving the claim.

    To complete the proof, recall that ``map from $\alpha_i$'' in the above schema can be described by applying the foamation functor of Proposition~\ref{prop:FoamationFunctor} to $\alpha_i$, yielding a $\mathfrak{gl}_2$-foam between $\mathfrak{gl}_2$-webs, and then forgetting 2-labeled facets and edges to get a cobordism $\Sigma_i$ from a flat tangle $T_i$ to a flat tangle $T'_i$. The map from $\alpha_i$ is then the up-to-sign map between spectral bimodules $\mathscr{X}(T_i) \to \mathscr{X}(T'_i)$ associated to $\Sigma$ in \cite[item (4) of Definition 4.24 and paragraph above Theorem 6]{LLS-func}. Item (PMF-6) of \cite[Proposition 4.25]{LLS-func} implies that composing the maps from all $\alpha_i$ gives the same map as composing the cobordisms $\Sigma_i$ to obtain a cobordism $\Sigma$ from a flat tangle $T$ to a flat tangle $T'$, then taking the map $\mathscr{X}(\Sigma) \colon \mathscr{X}(T) \to \mathscr{X}(T')$ associated to $\Sigma$.

    Now, for any defining relation of $\mathcal{U}_Q(\mathfrak{gl}_n)^{\mathbb{F}_2}$, one can check (although there are several cases to consider) that the cobordisms $\Sigma$ from the terms of the relation, formally summed, form an instance of the $\mathbb{F}_2$-coefficient Bar-Natan local relations for dotted cobordisms (these are: neck cutting, sphere with $k$ dots is zero if $k \neq 1$ and can be removed otherwise, two dots on the same connected component give zero). It thus suffices to show that when such a formal $\mathbb{F}_2$-linear combination of cobordisms $\Sigma$ forms an instance of the Bar-Natan relations, the corresponding linear combination of 2-morphisms $\mathscr{X}(\Sigma)$ in $\mathrm{SpecBim}^{\mathbb{F}_2}$ is zero.

    First note that by \cite[Theorem 6]{LLS-func}, since we only care about the 2-morphisms $\mathscr{X}(\Sigma)$ up to sign, we only need to care about the cobordisms $\Sigma$ up to isotopy. By \cite[Proposition 7.2]{LLS-func}, the maps $\mathscr{X}(\Sigma)$ also satisfy the Bar-Natan neck cutting relation over $\mathbb{F}_2$.
    
    Suppose that some $\Sigma$ has a connected component that is a closed sphere with $k$ dots. After an isotopy, we can apply item (PMF-4) of \cite[Proposition 4.25]{LLS-func} (specifically, commutativity of the diagram of \cite[(4.3)]{LLS-func}) to localize to a cobordism from the empty flat tangle to itself consisting of a single closed sphere with $k$ dots. By \cite[Lemma 7.3]{LLS-func}, the $\mathbb{F}_2$ Bar-Natan relations for closed spheres hold for the map associated to this local cobordism. Thus, they hold for $\Sigma$.
    
    Finally, if some $\Sigma$ has two dots on the same connected component, after an isotopy we can assume the dots are both on a sphere that is joined to the rest of $\Sigma$ by a neck. The neck-cutting relation implies that, over $\mathbb{F}_2$, $\mathscr{X}(\Sigma)$ is a sum of a map associated to a cobordism with a two-dot closed sphere and a map associated to a cobordism with a three-dot closed sphere. Both of these maps are zero by the previous case, proving the theorem.
\end{proof}

\begin{remark}
We could have stated a slightly stronger result than Theorem~\ref{thm:2rep}.  The most accurate statement is that some version of relations in the categorified quantum group $\cal{U}_Q(\mathfrak{gl}_n)$ hold in $\mathrm{SpecBim}$ where each term in a linear combination of 2-morphisms has been multiplied individually by $\pm 1$.
\end{remark}

\bibliographystyle{alpha}
\bibliography{bib_clean}

\end{document}